\theoremstyle{definition}
\newtheorem{definition}{Definition}%[section]
\newtheorem{theorem}[definition]{Theorem}
\theoremstyle{remark}
\newtheorem{remark}[definition]{Remark}
\newcounter{enumctr}
\newcommand{\R}{\mathbb{R}}
\newcommand{\Z}{\mathbb{Z}}
\newcommand{\C}{\mathbb{C}}
\newcommand{\eps}{\varepsilon}
\renewcommand{\phi}{\varphi}
\newcommand{\rT}{\mathrm {T}}
\begin{document}
\title{\vspace*{-10mm}
Asymptotic stability of  linear fractional systems with constant coefficients and small time dependent perturbations}
\author{
N.D.~Cong\footnote{\tt ndcong@math.ac.vn, \rm Institute of Mathematics, Vietnam Academy of Science and Technology, 18 Hoang Quoc Viet, 10307 Ha Noi, Viet Nam},
T.S.~Doan\footnote{\tt dtson@math.ac.vn, \rm Institute of Mathematics, Vietnam Academy of Science and Technology, 18 Hoang Quoc Viet, 10307 Ha Noi, Viet Nam},
\;and\;
H.T.~Tuan\footnote{\tt httuan@math.ac.vn, \rm Institute of Mathematics, Vietnam Academy of Science and Technology, 18 Hoang Quoc Viet, 10307 Ha Noi, Viet Nam}}
\date{}
\maketitle
\begin{abstract}
Our aim in this paper is to investigate the asymptotic behavior of solutions of the perturbed linear fractional differential system. We show that if the original linear autonomous system is asymptotically stable then under the action of small (either linear or nonlinear) nonautonomous perturbations the trivial solution of the perturbed system is also asymptotically stable. 
\end{abstract}

{\bf Keywords: } {\small Fractional differential equations;
linear systems; stability; asymptotic stability.}

{\it 2010 Mathematics Subject Classification:} {\small 34Dxx, 34A30,
26A33.}

\section{Introduction}
In recent years, fractional differential equations have attracted increasing interest due to
the fact that many mathematical problems in science and engineering can be modeled by fractional differential equations, see e.g.,\ \cite{Podlubny, Kilbas, Kai, Gorenflo_2014}. Although several results on asymptotic behavior of fractional differential equations are already published (e.g., on stability theory \cite{Matignon, Deng_2007, Wen_2008, Sabatier, Li_2011, Qian}, Lyapunov exponents \cite{Cong_1, Cong_2}, attractivity \cite{Chen_2012}, stable manifolds \cite{Cong}), the development of a qualitative theory for fractional differential equations is still in its infancy. One of the reasons for this is the fact that the solution to a fractional differential equation does not generate a semigroup due to the history memory by the induced weakly singular kernel.

In 1996, Matignon \cite{Matignon} studied homogeneous linear fractional differential equations involving Caputo's derivative and has given a well-known stability criterion for these equations. This criterion was developed by several authors. In \cite{Deng_2007}, Deng {\em et al.} studied the stability of some fractional systems with multiple time delays. Later, Sabatier {\em et al.}~\cite{Sabatier} used Linear Matrix Inequality in the stability analysis of inhomogeneous linear fractional systems. In 2010,  Qian {\em et al.}~\cite{Qian}  investigated the stability of fractional differential equations with Riemann--Liouville derivative for linear systems, perturbed systems and time-delayed systems. 

In this paper, we consider the $d$-dimensional fractional differential equation involving the \emph{Caputo's derivative of order $\alpha \in (0,1)$}:
\begin{equation}\label{mainEq0}
^{C\!}D_{0+}^\alpha x(t)=Ax(t)+f(t,x(t))
\end{equation}
with the initial condition
\[
x(0)=x_0,
\]
where $A\in  \R^{d\times d}$ is a  constant ($d\times d$)-matrix and 
$f:[0,\infty)\times \R^d\rightarrow \R^d$ is a continuous vector-valued function such that 
\begin{equation}\label{f0-Cond}
f(t,0)=0\qquad \hbox{for all}\quad t\geq 0,
\end{equation}
and there exists a continuous function $K:[0,\infty)\rightarrow \R_+$ satisfying
\begin{equation}\label{LipCond}
\|f(t,x)-f(t,y)\|\le K(t)\|x-y\| \qquad\hbox{for all}\quad t\geq 0\quad\hbox{and}\quad x,y\in \R^d.
\end{equation}
From \eqref{f0-Cond}--\eqref{LipCond} it follows that the fractional differential equation \eqref{mainEq0} has unique solution for any given initial value (see Baleanu and Mustafa~\cite[Theorem~2]{Baleanu}), and $x\equiv 0$ is the trivial solution of \eqref{mainEq0}.

The case when $f(t,x)$ is linear in $x$ is of special interest and will be treated in one section of the paper; namely we will
 consider the $d$-dimensional fractional differential equation involving the \emph{Caputo's derivative of order $\alpha \in (0,1)$}:
\begin{equation}\label{mainEq}
^{C\!}D_{0+}^\alpha x(t)=[A+Q(t)]x(t)
\end{equation}
with the initial condition
\[
x(0)=x_0,
\]
where $A\in  \R^{d\times d}$ is a  constant ($d\times d$)-matrix and 
$Q:\R_+\rightarrow \R^{d\times d}$ is a continuous matrix-valued function.

If $f$ and $Q$ vanish on $[0,\infty)$, the systems \eqref{mainEq0} and \eqref{mainEq} reduce to the linear time-invariant fractional differential equation
\begin{equation}\label{Eq2}
^{C\!}D_{0+}^\alpha x(t)=Ax(t).
\end{equation}
System \eqref{Eq2} is called the original unperturbed system, whereas \eqref{mainEq} is called the (linear) perturbed system and $Q$ is called the (linear) perturbation, \eqref{mainEq0} is called the (nonlinear) perturbed system and $f$ is called the (nonlinear) perturbation.

We are interested in the asymptotic stability of the trivial solution of \eqref{mainEq0} and \eqref{mainEq}. It is natural to expect that if the original unperturbed system \eqref{Eq2} is asymptotically stable and the perturbations $f$ and $Q$ are small is some sense then the perturbed systems \eqref{mainEq0} and \eqref{mainEq} are asymptotically stable, since such kind of results exist in the theory of ordinary differential equations, see e.g., Coddington and Levinson~\cite[Chapter 13]{Coddington}, Adrianova~\cite[Chapter IV, \S 3]{Adrianova}. In this paper we will show that this is also the case for fractional differential equations. Note that if the unperturbed system \eqref{Eq2} is asymptotically stable and the nonlinear perturbation $f$ having Lipschitz constant uniformly small in a neighborhood of the origin, then the trivial solution of the nonlinear perturbed system \eqref{mainEq0} is also asymptotically stable, see \cite{Cong_3}.

It is well known that the trivial solution of the original unperturbed system \eqref{Eq2}  is asymptotically stable if and only if the spectrum $\sigma(A)$ of the matrix $A\in  \R^{d\times d}$ (spectrum $\sigma(A)$ is the set of eigenvalues of the matrix $A$) satisfies the condition
\begin{equation}\label{StabCond}
\sigma(A)\subset \left\{\lambda \in \C \setminus \{0\}: |\arg(\lambda)|>\dfrac{\alpha \pi}{2}\right\},
\end{equation}
see Diethelm~\cite[Theorem~7.20, p.~158]{Kai}. 

Let us look at the linear perturbed system \eqref{mainEq}, since $Q$ is continuous, for any given initial value the equation \eqref{mainEq} has unique solution existing on the whole $\R_+$ (see Baleanu and Mustafa~\cite[Theorem~2]{Baleanu} and Tisdell~\cite[Theorem~6.4]{Tisdell2012}).
We prove that, provided \eqref{StabCond} is satisfied, if the perturbation $Q$ is small in some sense, the trivial solution of \eqref{mainEq} is asymptotically stable. To do this, we need two preparatory steps. First, using a variation of constants formula which provides the link between the solutions of  the perturbed system \eqref{mainEq} and the solutions of the original unperturbed system \eqref{Eq2}, we define the Lyapunov--Perron operator associated with the equation \eqref{mainEq}, see Theorem \ref{Var_Const_Form_1}. Then, using some properties of the Mittag-Leffler functions and the assumption that $Q$ is small, we estimate this operator. Consequently, the asymptotic stability of \eqref{mainEq} is showed. 

Now, for the nonlinear perturbed system \eqref{mainEq0} we will show that with small modifications the arguments for the linear case will work also for the nonlinear one, hence we will get similar stability theorems for the case of nonlinear perturbations.

The paper is organized as follows: Section~\ref{sec.preliminaries} is a preparatory section where we recall some basic notions and results from fractional calculus and some asymptotic estimations of Mittag-Leffler functions which are needed later for the proofs of our stability theorems. Section~\ref{sec.mainresults} is devoted to the main results on asymptotic stability of the trivial solution of the linear perturbed system \eqref{mainEq} under various assumptions on smallness of $Q$---uniform small $Q$ (Theorem~\ref{stability-by-uniformly-small-pertubation}) and decaying $Q$ (Theorem~\ref{stability-by-decaying-pertubation}).   Section~\ref{sec.general.theory} is devoted to the general case of nonlinear perturbed system \eqref{mainEq0}, where we will formulate and prove stability theorems for \eqref{mainEq0} under various assumptions on smallness of $f$. 
%The results presented in Section~\ref{sec.general.theory}  are generalizations of the results presented in %Section~\ref{sec.mainresults}, but the proofs in the later section are small modifications of those of the %previous section.

To conclude this introductory section, we introduce some notations which are used throughout the paper.

We denote by $\R_+$ the set of all nonnegative real numbers, by $\Z_+$ the set of all nonnegative integers. Let $\R^d$ be endowed with an arbitrary norm $\|\cdot\|$. Denote by $C([0,\infty);\R^d)$ the space of continuous functions from $[0,\infty)$ to $\R^d$,
 and by $\left(C_\infty(\R^d),\|\cdot\|_\infty\right)\subset C([0,\infty);\R^d)$ the space of all continuous functions $\xi:\R_+\rightarrow \R^d$ 
which are uniformly bounded on $\R_+$, i.e.,
\[
\|\xi\|_\infty:=\sup_{t\in \R_+}\|\xi(t)\|<\infty.
\]
It is well known that $\left(C_\infty(\R^d),\|\cdot\|_\infty\right)$ is a Banach space. 
\section{Preliminaries}\label{sec.preliminaries}
\subsection{Fractional calculus}
We start this subsection by briefly recalling a framework of fractional calculus and fractional differential equations. We refer the reader to the books \cite{Kai,Kilbas} for more details.
%----------------------------
% Fractional calculus
%-----------------------------
Let $\alpha>0$ and $[a,b]\subset \R$. Let $x:[a,b]\rightarrow \R$ be a measurable function such that $x\in L^1([a,b])$, i.e.,\ $\int_a^b|x(\tau)|\;d\tau<\infty$. Then, the Riemann--Liouville integral of order $\alpha$ is defined by
\[
I_{a+}^{\alpha}x(t):=\frac{1}{\Gamma(\alpha)}\int_a^t(t-\tau)^{\alpha-1}x(\tau)\;d\tau\quad \hbox{ for } t\in (a,b],
\]
where the Gamma function $\Gamma:(0,\infty)\rightarrow \R$ is defined as
\[
\Gamma(\alpha):=\int_0^\infty \tau^{\alpha-1}\exp(-\tau)\;d\tau,
\]
see e.g., Diethelm~\cite{Kai}. The corresponding Riemann--Liouville fractional derivative of order $\alpha $ is given by
\[
^{R\!}D_{a+}^\alpha x(t):=(D^m I_{a+}^{m-\alpha}x)(t) \qquad\hbox{ for } t\in (a,b],
\]
where $D=\frac{d}{dt}$ is the usual derivative and $m:=\lceil\alpha\rceil$ is the smallest integer bigger or equal to $\alpha$. On the other hand, the \emph{Caputo fractional derivative } $^{C\!}D_{a+}^\alpha x$ of a function $x\in C^m([a,b])$, which was introduced by Caputo (see e.g., Diethelm~\cite{Kai}), is defined by
\[
^{C\!}D_{a+}^\alpha x(t):=(I_{a+}^{m-\alpha}D^m x)(t),\qquad \hbox{ for } t\in (a,b].
\]
The Caputo fractional derivative of a $d$-dimensional vector function $x(t)=(x_1(t),\dots,x_d(t))^{\rT}$ is defined component-wise as $$^{C\!}D^\alpha_{0+}x(t)=(^{C\!}D^\alpha_{0+}x_1(t),\dots,^{C\!}D^\alpha_{0+}x_d(t))^{\rT}.$$
Let us look at the unperturbed system \eqref{Eq2}. Since $A$ is a constant matrix, this equation can be solved explicitly; namely,
$E_{\alpha}(t^{\alpha}A)x$ solves \eqref{Eq2} with the initial condition $x(0)=x$, where the \emph{Mittag-Leffler matrix function} $E_{\alpha,\beta}(A)$, for $\beta\in\R$ and a matrix $A\in\R^{d\times d}$ is defined as
\[
E_{\alpha,\beta}(A):=\sum_{k=0}^\infty \frac{A^k}{\Gamma(\alpha k+\beta)},\qquad E_{\alpha}(A):=E_{\alpha,1}(A),
\]
see, e.g.,  Bonilla {\em et al.}~\cite{Bonilla2007} and Diethelm~\cite{Kai}. Since $Q$ is time dependent, it is in general impossible to provide an explicit form of the solution of \eqref{mainEq}. However, using the variation of constants formula, see e.g., Bonilla {\em et al.}~\cite{Bonilla2007}, Kilbas {\em et al.}~\cite[Theorem 5.15, p.~323]{Kilbas} and Diethelm~\cite[Theorem 7.2, p.~135]{Kai}, we are able to characterize a solution as a fixed point of the associated Lyapunov--Perron operator: 

For any $x\in \R^{d}$, the operator $\mathcal{T}_{x}: C([0,\infty);\R^d)\rightarrow C([0,\infty);\R^d)$, which  is defined by
\begin{equation}\label{eqn.LPO.nonlinear}
\mathcal{T}_{x}(\xi)(t)=E_\alpha(t^\alpha A)\,x+\int_0^t (t-\tau)^{\alpha-1}E_{\alpha,\alpha}((t-\tau)^\alpha A)f(\tau,\xi(\tau))\;d\tau
\end{equation}
is called the \emph{Lyapunov--Perron operator associated with \eqref{mainEq0}}. (If in \eqref{eqn.LPO.nonlinear} we change $f(\tau,\xi(\tau))$ to $Q(\tau)\xi(\tau)$ then we get 
\emph{Lyapunov--Perron operator associated with \eqref{mainEq}}.)  The role of this operator is stated in the following theorem. The proof of this theorem is a direct consequence of the variation of constants formula and the existence and uniqueness of solutions for initial value problems for \eqref{mainEq0}. 
\begin{theorem}
\label{Var_Const_Form_1}
Let $x\in\R^{d}$ be arbitrary and $\xi:\R_+\rightarrow \R^{d}$ be a continuous function satisfying that $\xi(0)=x$. Then, the following statements are equivalent:
\begin{itemize}
\item [(i)] $\xi$ is a solution of \eqref{mainEq0} satisfying the initial condition $x(0)=x$.
\item [(ii)] $\xi$ is a fixed point of the operator $\mathcal {T}_{x}$.
\end{itemize}
\end{theorem}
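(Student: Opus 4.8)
The plan is to pass through the equivalent Volterra integral equation and then invoke the variation of constants formula; the argument is identical whether the perturbation is $f(\tau,\xi(\tau))$ as in \eqref{mainEq0} or $Q(\tau)\xi(\tau)$ as in \eqref{mainEq}. First I would record the elementary observation that, since $f$ and $\xi$ are continuous, the map $\tau\mapsto f(\tau,\xi(\tau))$ is continuous on $[0,\infty)$, hence locally bounded and locally integrable. Using that $(t-\tau)^{\alpha-1}$ is integrable near $\tau=t$ and that $E_{\alpha,\alpha}(\cdot)$ is continuous (so bounded on bounded sets), the integral in \eqref{eqn.LPO.nonlinear} is well defined and depends continuously on $t$; thus $\mathcal{T}_{x}(\xi)\in C([0,\infty);\R^d)$ and the statement makes sense. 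Moreover $\mathcal{T}_{x}(\xi)(0)=E_\alpha(0)\,x=x$, so any fixed point of $\mathcal{T}_{x}$ automatically satisfies the initial condition $\xi(0)=x$.

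For the equivalence itself I would use the standard fact (see e.g.\ Diethelm~\cite{Kai} or Kilbas {\em et al.}~\cite{Kilbas}) that a continuous function $\xi$ with $\xi(0)=x$ solves the Caputo initial value problem \eqref{mainEq0} if and only if it solves the Volterra integral equation
\[
\xi(t)=x+\frac{1}{\Gamma(\alpha)}\int_0^t (t-\tau)^{\alpha-1}\bigl(A\xi(\tau)+f(\tau,\xi(\tau))\bigr)\,d\tau,
\]
the point being that $\tau\mapsto A\xi(\tau)+f(\tau,\xi(\tau))$ is continuous, so that applying $I_{0+}^\alpha$ to both sides of \eqref{mainEq0} and using the identity $I_{0+}^\alpha\,{}^{C\!}D_{0+}^\alpha\xi=\xi-\xi(0)$ is legitimate. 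It then remains to identify the solution set of this Volterra equation with the fixed point set of $\mathcal{T}_{x}$.

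To do this I would freeze $g(\tau):=f(\tau,\xi(\tau))$ and apply the variation of constants formula for the inhomogeneous linear Caputo equation ${}^{C\!}D_{0+}^\alpha y(t)=Ay(t)+g(t)$, $y(0)=x$ (see Bonilla {\em et al.}~\cite{Bonilla2007}, Kilbas {\em et al.}~\cite[Theorem~5.15, p.~323]{Kilbas} and Diethelm~\cite[Theorem~7.2, p.~135]{Kai}): its unique continuous solution is exactly
\[
y(t)=E_\alpha(t^\alpha A)\,x+\int_0^t (t-\tau)^{\alpha-1}E_{\alpha,\alpha}((t-\tau)^\alpha A)\,g(\tau)\,d\tau=\mathcal{T}_{x}(\xi)(t).
\]
Hence, for a continuous $\xi$ with $\xi(0)=x$, the displayed Volterra identity (equivalently, statement (i)) holds if and only if $\xi=\mathcal{T}_{x}(\xi)$, which is statement (ii). Replacing $f(\tau,\xi(\tau))$ throughout by $Q(\tau)\xi(\tau)$ yields verbatim the corresponding statement for \eqref{mainEq}.

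I expect the only genuinely delicate point to be the bookkeeping about regularity: the Caputo derivative as introduced in Section~\ref{sec.preliminaries} acts a priori on $C^1$ functions, whereas a fixed point of $\mathcal{T}_{x}$ is obtained merely as a continuous function. This is handled in the usual way, either by adopting the Volterra integral equation as the working definition of a solution of \eqref{mainEq0} (legitimate because $f$ is continuous, so the two notions agree on the relevant class of functions), or by noting a posteriori that continuity of $A\xi(\cdot)+f(\cdot,\xi(\cdot))$ forces the fixed point to be smooth enough for \eqref{mainEq0} to hold pointwise. Everything else --- convergence of the integrals, continuity in $t$, the value at $t=0$ --- is routine given the elementary properties of the Mittag-Leffler functions. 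The existence and uniqueness of solutions of \eqref{mainEq0} (from \eqref{f0-Cond}--\eqref{LipCond} via Baleanu and Mustafa~\cite{Baleanu}) is not needed for the equivalence, but it guarantees in addition that the fixed point, when it exists, is unique.
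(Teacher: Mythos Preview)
Your proposal is correct and follows essentially the same route as the paper, which simply states that the theorem ``is a direct consequence of the variation of constants formula and the existence and uniqueness of solutions for initial value problems for \eqref{mainEq0}''; you have supplied the details the paper omits. Your remark that existence and uniqueness are not strictly needed for the bare equivalence (i)$\Leftrightarrow$(ii) is a minor sharpening of the paper's terse justification.
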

\subsection{Mittag-Leffler functions}\label{MLF}
In this subsection, we present some estimations involving the Mittag-Leffler function and its derivative. The results are needed for the proofs of the stability theorems presented in Sections~\ref{sec.mainresults} and \ref{sec.general.theory}.  These results are light refinements and adaption of the known results in the theory of Mittag-Leffler functions to our case. To derive the estimations one uses the spectral representation of Mittag-Leffler functions (see Podlubny~\cite{Podlubny}) and Jordan normal form of matrices (see Lancaster and Tismenetsky~\cite{Lancaster}). To save the length of the paper we do not give full proofs of the theorem, but give only sketch of the proofs. 

\begin{theorem}\label{thm12}
Let $\lambda \in \C\setminus\{0\}$ with $\frac{\alpha \pi }{2}< |\text{arg}(\lambda)|\leq \pi$ and $l\in \Z_+$. Then, there exist positive constants $M_l(\alpha,\lambda), \hat{M}_l(\alpha,\lambda)$ and a positive real number $t_0$ such that the following statements hold
\begin{itemize}
\item [(i)] $|\dfrac{d^l}{d\lambda^l} E_{\alpha}(\lambda t^{\alpha})| \le \frac{M_l(\alpha,\lambda)}{t^{\alpha}}\quad  \hbox{for any } t>t_0,$
\item [(ii)] $|\dfrac{d^l}{d\lambda^l} E_{\alpha,\alpha}(\lambda t^{\alpha})| \le \frac{\hat {M}_l(\alpha,\lambda)}{t^{2\alpha}}\quad  \hbox{for any } t>t_0.$
\end{itemize}
\end{theorem}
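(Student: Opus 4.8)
\emph{Proof strategy.} The plan is to reduce Theorem~\ref{thm12}, which is a purely scalar statement, to the classical sectorial asymptotics of the Mittag-Leffler function, and then to obtain the $\lambda$-derivative bounds by a Cauchy estimate. First I would put $z=\lambda t^{\alpha}$ and observe that $\arg(z)=\arg(\lambda)$ and $|z|=|\lambda|\,t^{\alpha}\to\infty$ as $t\to\infty$, so that $z$ recedes to infinity along a fixed ray lying strictly inside the sector $\frac{\alpha\pi}{2}<|\arg(w)|\le\pi$. Then I would quote the classical expansion (see Podlubny~\cite{Podlubny}, also Diethelm~\cite{Kai}): for any $\mu$ with $\frac{\alpha\pi}{2}<\mu<\min\{\pi,\alpha\pi\}$ and any integer $N\ge1$,
\[
E_{\alpha,\beta}(w)=-\sum_{k=1}^{N}\frac{w^{-k}}{\Gamma(\beta-\alpha k)}+O\bigl(|w|^{-N-1}\bigr)\quad\text{as }|w|\to\infty,\ \mu\le|\arg(w)|\le\pi .
\]
Specialising to $\beta=1$ and to $\beta=\alpha$ with $N=1$ (in the latter case the $w^{-1}$--coefficient $1/\Gamma(0)$ vanishes, so the remainder $O(|w|^{-2})$ is the leading term), this produces a radius $R_{0}$ and constants with $|E_{\alpha}(w)|\le C_{1}|w|^{-1}$ and $|E_{\alpha,\alpha}(w)|\le C_{2}|w|^{-2}$ on the set $\{|w|\ge R_{0}\}\cap\{\mu\le|\arg(w)|\le\pi\}$; this already settles the case $l=0$ of (i) and (ii).

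For $l\ge1$ the plan is to exploit that $E_{\alpha}$ and $E_{\alpha,\alpha}$ are entire and to apply Cauchy's estimate on the circle $\{|w-w_{0}|=c|w_{0}|\}$ with centre $w_{0}=\lambda t^{\alpha}$, where $c\in(0,1)$ is fixed once and for all. Concretely I would first shrink $\mu$ so that in addition $\mu<|\arg(\lambda)|$, then take $c\le\sin\bigl(|\arg(\lambda)|-\mu\bigr)$, which forces the whole circle into the sector $\mu\le|\arg(w)|\le\pi$, and finally set $t_{0}:=\bigl(R_{0}/((1-c)|\lambda|)\bigr)^{1/\alpha}$, so that for $t>t_{0}$ the circle also lies in $\{|w|\ge R_{0}\}$. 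On that circle the $l=0$ bounds hold, so Cauchy's inequality gives $|E_{\alpha}^{(l)}(w_{0})|\le l!\,C_{1}c^{-l}(1-c)^{-1}|w_{0}|^{-l-1}$ and $|E_{\alpha,\alpha}^{(l)}(w_{0})|\le l!\,C_{2}c^{-l}(1-c)^{-2}|w_{0}|^{-l-2}$. Then the chain rule $\frac{d^{l}}{d\lambda^{l}}E_{\alpha}(\lambda t^{\alpha})=t^{\alpha l}\,E_{\alpha}^{(l)}(\lambda t^{\alpha})$ converts these into
\[
\Bigl|\tfrac{d^{l}}{d\lambda^{l}}E_{\alpha}(\lambda t^{\alpha})\Bigr|\le\frac{l!\,C_{1}}{c^{l}(1-c)\,|\lambda|^{l+1}}\cdot\frac{1}{t^{\alpha}},\qquad \Bigl|\tfrac{d^{l}}{d\lambda^{l}}E_{\alpha,\alpha}(\lambda t^{\alpha})\Bigr|\le\frac{l!\,C_{2}}{c^{l}(1-c)^{2}\,|\lambda|^{l+2}}\cdot\frac{1}{t^{2\alpha}}
\]
for $t>t_{0}$, the surplus factors $t^{\alpha l}$ cancelling against $|w_{0}|^{-l}=(|\lambda|t^{\alpha})^{-l}$; from this one simply reads off $M_{l}(\alpha,\lambda)$, $\hat M_{l}(\alpha,\lambda)$ and $t_{0}$.

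The one point I expect to need genuine care is the geometry in the second step: the Cauchy contour must remain inside the sector where the asymptotic expansion is valid yet keep radius of order $|w_{0}|=|\lambda|t^{\alpha}$, since a smaller radius would cost powers of $t$ under the chain rule and destroy the $t^{-\alpha}$/$t^{-2\alpha}$ rates. The choice $c<\sin(|\arg(\lambda)|-\mu)$ is precisely what makes this work, and in the borderline case $|\arg(\lambda)|=\pi$ one checks separately (easily) that a disc of radius $<|w_{0}|$ about a point near the negative real axis cannot reach the positive real axis, so no wrap-around of the argument occurs and the circle still lies in $\{\mu\le|\arg(w)|\le\pi\}$. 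Everything else is routine rescaling; since the claim is only a mild quantitative sharpening of well-known Mittag-Leffler asymptotics, I would, as the authors announce, present it as a sketch rather than in full detail.
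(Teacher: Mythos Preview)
Your argument is correct and differs from the paper's intended route. The paper's sketch says to work directly with the Hankel-type integral representation of $E_{\alpha,\beta}$ and to estimate the $\lambda$-derivatives by differentiating under the integral sign and bounding the resulting contour integrals, in the manner of Podlubny's proofs of his Theorems~1.3--1.4; this effectively re-runs the contour analysis for every $l$. You instead take the case $l=0$ as a black box (it is exactly Podlubny's Theorem~1.4) and bootstrap to all $l\ge 1$ via Cauchy's inequality on a circle of radius $c\,|\lambda|t^{\alpha}$, observing that the chain-rule factor $t^{\alpha l}$ is precisely cancelled by the $|w_{0}|^{-l}$ coming from the Cauchy radius. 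This is a clean reduction that explains in one stroke why every $l$ produces the \emph{same} decay rate $t^{-\alpha}$ (respectively $t^{-2\alpha}$), and it avoids repeating any contour estimates. The only delicate point is the sector geometry you already flag; your choice $c\le\sin(|\arg\lambda|-\mu)$ handles the generic situation, and since the whole picture is scale-invariant in $|w_{0}|$ the existence of some admissible $c\in(0,1)$ is clear in every case, including $|\arg\lambda|=\pi$. The paper's direct approach would in principle yield the full asymptotic expansion of the derivatives, but for the bare bounds stated in the theorem your method is more economical.
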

%\begin{proof}
For a proof of this theorem one uses integral representation of Mittag-Leffler functions and method of estimations of the integrals similar to that of the proof of Theorem 1.3 and  Theorem 1.4 in the book by Podlubny~\cite[pp. 32--34]{Podlubny}. Note that the case $l=0$ of our Theorem~\ref{thm12} is contained in the conclusion of Theorem 1.4 by Podlubny~\cite[Formula (1.143), p. 34]{Podlubny}.  
%\end{proof}
\begin{theorem}\label{thm.spec.estimate}
Let $A\in \R^{d\times d}$. Assume that the spectrum of $A$ satisfies the relation 
$$
\sigma(A)\subset\left\{\lambda\in \C\setminus\{0\}:|\arg(\lambda)|>\frac{\alpha \pi}{2}\right\}. 
$$
Then, the following statements hold:
\begin{itemize}
\item [(i)]  $\lim_{t\to \infty}\|E_\alpha(t^\alpha A)\|=0;$
\item [(ii)]  $\int_0^\infty \tau^{\alpha -1}\| E_{\alpha,\alpha}(\tau^\alpha A)\| \,d\tau <\infty.$
\end{itemize}
\end{theorem}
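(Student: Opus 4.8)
The plan is to reduce both statements to the scalar estimates of Theorem~\ref{thm12} via the Jordan normal form. Write $A=PJP^{-1}$ with $J=\mathrm{diag}(J_1,\dots,J_r)$ the (complex) Jordan form of $A$, where $J_i=\lambda_i I_{n_i}+N_i$, $\lambda_i\in\sigma(A)$, and $N_i$ is the nilpotent shift of size $n_i$ (so $N_i^{\,n_i}=0$). Since $E_{\alpha,\beta}(t^\alpha A)=P\,E_{\alpha,\beta}(t^\alpha J)\,P^{-1}$ and $E_{\alpha,\beta}(t^\alpha J)$ is block diagonal with blocks $E_{\alpha,\beta}(t^\alpha J_i)$, it suffices to estimate $\|E_{\alpha,\beta}(t^\alpha J_i)\|$ for each $i$; passing from $J$ back to $A$ costs only the fixed factor $\|P\|\,\|P^{-1}\|$, and since $P$ may be complex we work throughout with the norm extended to $\C^d$, which is equivalent to the original one.

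For a single block, expanding $(\lambda_i I_{n_i}+N_i)^k$ by the binomial theorem and using $\binom{k}{l}\lambda_i^{k-l}=\frac{1}{l!}\frac{d^l}{d\lambda^l}\lambda^k\big|_{\lambda=\lambda_i}$, I would obtain the identity
\[
E_{\alpha,\beta}(t^\alpha J_i)=\sum_{l=0}^{n_i-1}\frac{N_i^{\,l}}{l!}\cdot\frac{d^l}{d\lambda^l}E_{\alpha,\beta}(t^\alpha\lambda)\Big|_{\lambda=\lambda_i},
\]
valid for $\beta=1$ and $\beta=\alpha$ (the termwise differentiation being legitimate because $E_{\alpha,\beta}$ is entire). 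Hence every entry of $E_{\alpha,\beta}(t^\alpha J_i)$ is a finite linear combination of the scalar quantities $\frac{d^l}{d\lambda^l}E_{\alpha,\beta}(t^\alpha\lambda)\big|_{\lambda=\lambda_i}$, $0\le l\le n_i-1$. Each $\lambda_i$ is nonzero and, by the hypothesis on $\sigma(A)$, satisfies $\frac{\alpha\pi}{2}<|\arg(\lambda_i)|\le\pi$, so Theorem~\ref{thm12} applies: for large $t$, $\big|\frac{d^l}{d\lambda^l}E_\alpha(\lambda t^\alpha)\big|\le M_l(\alpha,\lambda_i)/t^\alpha$ and $\big|\frac{d^l}{d\lambda^l}E_{\alpha,\alpha}(\lambda t^\alpha)\big|\le\hat M_l(\alpha,\lambda_i)/t^{2\alpha}$. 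Taking the maximum of the finitely many thresholds $t_0$ that arise (over $i$ and over $0\le l\le d-1$), I get constants $C,\hat C>0$ and $T>0$ with $\|E_\alpha(t^\alpha A)\|\le C/t^\alpha$ and $\|E_{\alpha,\alpha}(t^\alpha A)\|\le\hat C/t^{2\alpha}$ for all $t>T$.

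Statement~(i) is then immediate, since $C/t^\alpha\to 0$ as $t\to\infty$. For~(ii) I would split the integral at $T$. On $[0,T]$ the map $\tau\mapsto E_{\alpha,\alpha}(\tau^\alpha A)$ is continuous, hence bounded by some $C_0$, and $\int_0^T\tau^{\alpha-1}\,d\tau=T^\alpha/\alpha<\infty$ because $\alpha>0$; on $[T,\infty)$ we have $\tau^{\alpha-1}\|E_{\alpha,\alpha}(\tau^\alpha A)\|\le\hat C\,\tau^{-\alpha-1}$ and $\int_T^\infty\tau^{-\alpha-1}\,d\tau<\infty$, again because $\alpha>0$. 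Summing the two pieces yields~(ii). The one point needing genuine care is the block-expansion identity above: one must check that the binomial rearrangement of the defining power series is justified and that it correctly identifies the matrix coefficients with scalar derivatives of $E_{\alpha,\beta}$; the remaining bookkeeping (the constants $M_l,\hat M_l,t_0$ depend on the eigenvalue and the differentiation order, but there are only finitely many of them) and the final integrability computations are routine.
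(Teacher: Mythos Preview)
Your proposal is correct and follows precisely the route the paper sketches: reduce to Jordan blocks, express $E_{\alpha,\beta}(t^\alpha J_i)$ as a finite sum in the scalar derivatives $\frac{d^l}{d\lambda^l}E_{\alpha,\beta}(\lambda t^\alpha)\big|_{\lambda=\lambda_i}$, apply Theorem~\ref{thm12} to each, and then handle the easy integrability bookkeeping for~(ii). Your block-expansion identity is exactly the ``evaluation of series of Jordan matrices'' the paper alludes to (it is the Taylor expansion of the entire function $z\mapsto E_{\alpha,\beta}(t^\alpha z)$ about $\lambda_i$, truncated by the nilpotency of $N_i$), and your remark about extending the norm to $\C^d$ to accommodate a complex $P$ is a sensible precaution the paper leaves implicit.
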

For a proof of this theorem one uses the estimations given in Theorem~\ref{thm12}, the series presentation of Mittag-Leffler function of matrix argument, and the Jordan normal form of matrices (see, e.g.,  
Lancaster and Tismenetsky~\cite{Lancaster} for the Jordan normal form of matrices and evaluation of series of Jordan matrices).

\section{Asymptotic stability of linear FDEs with constant coefficients and small linear time dependent perturbations}\label{sec.mainresults}

In this section, we consider the linear system \eqref{mainEq}, i.e., the following system
\begin{equation*}
^{C\!}D_{0+}^{\alpha}x(t)=[A+Q(t)]x(t),
\end{equation*}
where $A\in \R^{d\times d}$, $Q:[0,\infty)\to \R^{d\times d}$ is a continuous matrix-valued function. In what follows, we denote the solution of \eqref{mainEq} with the initial condition $x(0)=x_0$ by $\varphi(\cdot,x_0)$.  We recall below the notions of stability and asymptotic stability of the trivial solution of \eqref{mainEq}, cf.\ Diethelm~\cite[Definition 7.2, p.~157]{Kai}.
\begin{definition}\label{DS}
The trivial solution of \eqref{mainEq} is called \emph{stable} if for any $\varepsilon >0$ there exists $\delta=\delta(\varepsilon)>0$ such that for every $\|x_0\|<\delta$ we have
\[
\|\phi(t,x_0)\|\leq \eps\qquad\hbox{for } t\ge 0.
\]
The trivial solution is called \emph{asymptotically stable} if it is stable and there exists $\widehat{\delta}> 0$ such that $\lim_{t\to \infty}\phi(t,x_0)=0$ whenever $\|x_0\|<\widehat\delta$.
\end{definition}

Now we will state and prove our first stability result for linear fractional differential equations.

\begin{theorem}[Robust Stability]\label{MainTheorem}
Assume that the spectrum of the matrix $A$ satisfies the condition
$$
\sigma(A)\subset\left\{\lambda\in \C \setminus \{0\} :|\arg(\lambda)|>\frac{\alpha \pi}{2}\right\},
$$
and, in addition, $Q$ satisfies
\begin{equation}\label{mainCond}
q:=\sup_{t\ge 0} \int_0^t  (t-\tau)^{\alpha-1}\|E_{\alpha,\alpha}((t-\tau)^\alpha A) Q(\tau)\|\;d\tau <1.
\end{equation}
Then the trivial solution of \eqref{mainEq} is asymptotically stable.
\end{theorem}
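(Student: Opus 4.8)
The plan is to use the Lyapunov--Perron operator $\mathcal{T}_{x_0}$ associated with \eqref{mainEq}, which by Theorem~\ref{Var_Const_Form_1} has the property that the unique solution $\varphi(\cdot,x_0)$ is its unique fixed point, and to show directly from the contraction-type estimate \eqref{mainCond} that this fixed point is small in sup-norm when $x_0$ is small, and that it tends to $0$ as $t\to\infty$. First I would work in the Banach space $C_\infty(\R^d)$; the key point is that $\mathcal{T}_{x_0}$ maps $C_\infty(\R^d)$ into itself and is a contraction there. Indeed, for $\xi\in C_\infty(\R^d)$,
\[
\|\mathcal{T}_{x_0}(\xi)(t)\|\le \|E_\alpha(t^\alpha A)\|\,\|x_0\|+\int_0^t(t-\tau)^{\alpha-1}\|E_{\alpha,\alpha}((t-\tau)^\alpha A)Q(\tau)\|\,d\tau\;\|\xi\|_\infty,
\]
and by Theorem~\ref{thm.spec.estimate}(i) the factor $\|E_\alpha(t^\alpha A)\|$ is bounded (in fact $\to 0$), so the right-hand side is bounded uniformly in $t$; moreover for $\xi,\eta\in C_\infty(\R^d)$ one gets $\|\mathcal{T}_{x_0}(\xi)-\mathcal{T}_{x_0}(\eta)\|_\infty\le q\,\|\xi-\eta\|_\infty$ with $q<1$. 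Hence by the contraction mapping principle $\mathcal{T}_{x_0}$ has a unique fixed point $\xi^*\in C_\infty(\R^d)$, which must coincide with $\varphi(\cdot,x_0)$ (the equality of the two solution concepts, one a priori only in $C([0,\infty);\R^d)$, needs a short argument, e.g.\ uniqueness of the IVP or a Gronwall-type bound showing every fixed point in $C([0,\infty);\R^d)$ is automatically bounded).

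Next I would extract the two quantitative conclusions. For \textbf{stability}: from the fixed-point identity $\xi^*=\mathcal{T}_{x_0}(\xi^*)$ and the estimate above evaluated at $\xi=\xi^*$, taking sup over $t$ gives $\|\xi^*\|_\infty\le M\|x_0\|+q\|\xi^*\|_\infty$ where $M:=\sup_{t\ge0}\|E_\alpha(t^\alpha A)\|<\infty$, hence $\|\varphi(\cdot,x_0)\|_\infty\le \dfrac{M}{1-q}\,\|x_0\|$. Given $\varepsilon>0$, choosing $\delta:=\varepsilon(1-q)/M$ yields $\|\varphi(t,x_0)\|\le\varepsilon$ for all $t\ge0$ whenever $\|x_0\|<\delta$; this is exactly Definition~\ref{DS}. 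For \textbf{attractivity}: write $\varphi(t,x_0)=E_\alpha(t^\alpha A)x_0+\int_0^t(t-\tau)^{\alpha-1}E_{\alpha,\alpha}((t-\tau)^\alpha A)Q(\tau)\varphi(\tau,x_0)\,d\tau$. The first term $\to 0$ by Theorem~\ref{thm.spec.estimate}(i). For the integral term I would set $L:=\|\varphi(\cdot,x_0)\|_\infty$ (finite, by the above), fix $\varepsilon>0$, use Theorem~\ref{thm.spec.estimate}(ii) to pick $T$ with $\int_T^\infty s^{\alpha-1}\|E_{\alpha,\alpha}(s^\alpha A)\|\,ds<\varepsilon$, and split $\int_0^t=\int_0^{t-T}+\int_{t-T}^t$: on the first piece the kernel argument $t-\tau\ge T$ so that part is bounded by $L\sup_{\|Q\|}\cdot\varepsilon$ — more carefully by $\big(\sup_{\tau\ge0}\|Q(\tau)\|\big)L\varepsilon$, which needs $Q$ bounded; to avoid an extra hypothesis I would instead absorb $\|Q(\tau)\|$ into the kernel using \eqref{mainCond} together with a tail-smallness argument, or invoke a fractional Gronwall/Halanay-type lemma. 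The cleanest route is: since $\int_0^\infty(t-\tau)^{\alpha-1}\|E_{\alpha,\alpha}((t-\tau)^\alpha A)Q(\tau)\|\,d\tau\le q<\infty$ uniformly, a dominated-convergence / tail argument in $\tau$ shows $\limsup_{t\to\infty}\|\varphi(t,x_0)\|\le q\,\limsup_{t\to\infty}\|\varphi(t,x_0)\|$, and since $\limsup_{t\to\infty}\|\varphi(t,x_0)\|\le L<\infty$ and $q<1$, this forces $\limsup_{t\to\infty}\|\varphi(t,x_0)\|=0$.

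The main obstacle I anticipate is precisely this last step: justifying that the convolution term $\int_0^t(t-\tau)^{\alpha-1}E_{\alpha,\alpha}((t-\tau)^\alpha A)Q(\tau)\varphi(\tau,x_0)\,d\tau$ vanishes as $t\to\infty$, given only the \emph{global} smallness condition \eqref{mainCond} and not pointwise decay of $Q$. One must carefully separate the contribution of "old" times $\tau$ (where $\varphi(\tau,x_0)$ may be large but the kernel mass over $\{\tau : t-\tau \text{ small}\}$ is controlled) from "recent" times (where $\varphi$ has had time to get small). The inequality $\limsup \le q\cdot\limsup$ is the heart of it, and making it rigorous requires either a measure-theoretic tail estimate on the kernel (uniform integrability of $(t-\tau)^{\alpha-1}\|E_{\alpha,\alpha}((t-\tau)^\alpha A)Q(\tau)\|$ in $\tau$, uniformly in $t$, near $\tau=t$ and for $\tau$ bounded) or a fractional-order Gronwall/Halanay lemma; one must be careful that the weak singularity of the kernel at $\tau=t$ does not cause trouble — it does not, because $(t-\tau)^{\alpha-1}$ is integrable near $\tau=t$ and $\|E_{\alpha,\alpha}((t-\tau)^\alpha A)\|$ is bounded there. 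Once that convergence is in hand, stability plus attractivity give asymptotic stability and the proof is complete.
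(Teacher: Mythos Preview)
Your proposal is correct and follows essentially the same strategy as the paper: contraction of the Lyapunov--Perron operator on $C_\infty(\R^d)$ gives the stability bound $\|\varphi(\cdot,x_0)\|_\infty\le \frac{M}{1-q}\|x_0\|$, and attractivity comes from the inequality $\limsup\le q\cdot\limsup$ you identify as the ``cleanest route.'' For the step you flag as the obstacle, the paper executes it exactly by splitting the convolution at a \emph{fixed} $T$ (chosen so that $\|\varphi(\tau,x_0)\|$ is within a prescribed margin of $a:=\limsup$ for all $\tau\ge T$): the $[0,T]$-piece vanishes as $t\to\infty$ because $Q$ is continuous and hence bounded on the compact set $[0,T]$ while $\int_{t-T}^t s^{\alpha-1}\|E_{\alpha,\alpha}(s^\alpha A)\|\,ds\to 0$ by Theorem~\ref{thm.spec.estimate}(ii), and the $[T,t]$-piece is bounded by $q$ times the near-$a$ bound on $\|\varphi\|$, yielding $a\le q(a+\text{margin})<a$ for a suitable margin --- so no global bound on $Q$ is needed, only continuity.
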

\begin{proof}
We follow the lines of the proof of Theorem 5 in [9] with some modifications to adapt to our case. Let $\eps>0$ be arbitrary. By virtue of Theorem \ref{thm.spec.estimate}(i),  $\sup_{t\geq 0} \|E_\alpha(t^\alpha A)\|\in (1,\infty)$. Therefore, 
\[
\delta:=\frac{(1-q)\eps}{\sup_{t\ge 0}\|E_\alpha(t^\alpha A)\|} \in (0,\eps),
\]
where $q$ is defined as in \eqref{mainCond}. To prove the asymptotic stability of the trivial solution of \eqref{mainEq}, it is sufficient to show that if $\|x\|\leq \delta$ then $\phi(\cdot,x)\in B_{C_{\infty}}(0,\eps)$ and $\lim_{t\to\infty}\phi(t,x)=0$, where 
\[
B_{C_{\infty}}(0,\eps):=\{\xi\in C([0,\infty);\R^d):\left||\xi|\right|_\infty\le \eps\}\subset C_\infty(\R^d)\subset
C([0,\infty);\R^d).
\]
Choose and fix an arbitrary $x\in \R^d$ such that $\|x\|\le \delta$. Let $\mathcal{T}_x$ be the Lyapunov--Perron operator associated with \eqref{mainEq}.  For $\xi \in B_{C_{\infty}}(0,\eps)$, we have
\begin{align*}
\|\mathcal{T}_x(\xi)(t)\|&\le \|E_\alpha(t^\alpha A)x\|+\int_0^t (t-u)^{\alpha-1}\|E_{\alpha,\alpha}((t-\tau)^\alpha A))Q(\tau)\xi(\tau)\|\;d\tau \\
& \le \delta \sup_{t\ge 0}\|E_\alpha(t^\alpha A)\|+\eps q\\
&\le \eps. 
\end{align*}
Consequently, $\mathcal{T}_x(B_{C_{\infty}}(0,\eps))\subset B_{C_{\infty}}(0,\eps)$. Moreover, for any $\xi,\tilde{\xi} \in B_{C_{\infty}}(0,\eps)$, we have
\begin{align*}
\mathcal{T}_x(\xi)(t)-\mathcal{T}_x(\tilde{\xi})(t)=\int_0^t (t-\tau)^{\alpha-1} E_{\alpha,\alpha}((t-\tau)^\alpha A)Q(\tau)[\xi(\tau)-\tilde{\xi}(\tau)]\;d\tau.
\end{align*}
Hence,
\begin{align*}
\|\mathcal{T}_x(\xi)-\mathcal{T}_x(\tilde{\xi})\|_\infty &\le \|\xi-\tilde{\xi}\|_\infty \sup_{t\ge 0}\int_0^t(t-\tau)^{\alpha-1}\| E_{\alpha,\alpha}((t-\tau)^\alpha A)Q(\tau)\|\;d\tau\\
&\le q \|\xi-\tilde{\xi}\|_\infty,
\end{align*}
and $\mathcal{T}_x$ is contractive if restricted to the closed ball $B_{C_\infty}(0,\eps)$. Using the Contraction Mapping Principle, there exists a unique fixed point $\xi \in B_{C_\infty}(0,\eps)$ of $\mathcal{T}_x$. According to Theorem \ref{Var_Const_Form_1}, this point is also the unique solution of \eqref{mainEq} satisfying the initial condition $x(0)=x$, i.e., $\phi(t,x)=\xi(t)$ for $t\geq 0$. Hence, $|\phi(t,x)|\leq \eps$ for $t\geq0$. To conclude the proof, we need to show that $a:=\limsup_{t\to\infty}\|\xi(t)\|=0$. Suppose the contrary that $a>0$. Then, there exists $T>0$ such that
\[
\|\xi(t)\|\le a+\frac{1-q}{2q+1}a \qquad \textup{for any}\quad t\ge T.
\]
According to Theorem~\ref{thm.spec.estimate}(ii), we have
\begin{eqnarray*}
&&\limsup_{t\to\infty}\left\| \int_0^{T}(t-\tau)^{\alpha-1}E_{\alpha,\alpha}((t-\tau)^\alpha A)Q(\tau)\xi(\tau)\,d\tau\right\|\\[1.5ex]
&\le& 
\eps \max_{t\in [0,T]}\|Q(t)\|\limsup_{t\to \infty}\int_{t-T}^t \tau^{\alpha-1}\|E_{\alpha,\alpha}(\tau^\alpha A)\|\, d\tau\\
&= &0.
\end{eqnarray*}
Therefore, using equality $\xi=\mathcal{T}_x\xi$ and Theorem~\ref{thm.spec.estimate}(i), we obtain that 
\begin{align*}
a =&\limsup_{t\to\infty}
\left\|\int_{T}^t(t-\tau)^{\alpha-1}E_{\alpha,\alpha}((t-\tau)^\alpha A)Q(\tau)\xi(\tau)d\tau
\right\|\\
\le&\left(a+\frac{1-q}{2q+1}a\right)\sup_{t\geq T}\int_T^t (t-\tau)^{\alpha-1}\|E_{\alpha,\alpha}((t-\tau)^\alpha A)Q(\tau)\| d\tau \\
\leq&\; \frac{a(2+q)}{2q+1}q \; = \; a\frac{2q + q^2}{2q+1} \;< \;a,
\end{align*} 
which is a contradiction. Hence, $a=0$ and the proof is complete.
\end{proof}
\begin{theorem}[Stability by uniformly small perturbation]\label{stability-by-uniformly-small-pertubation}
Assume that the spectrum of the matrix $A$ satisfies the condition
$$
\sigma(A)\subset \left\{\lambda\in \C \setminus \{0\} :|\arg(\lambda)|>\frac{\alpha \pi}{2}\right\}.
$$ 
Then there exists a positive number $\varepsilon>0$ such that if $Q$ satisfies
\begin{equation}\label{uniformCond}
\sup_{t\ge 0} \| Q(t)\|\ < \varepsilon,
\end{equation}
the trivial solution of \eqref{mainEq} is asymptotically stable.
\end{theorem}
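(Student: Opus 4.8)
The plan is to reduce Theorem~\ref{stability-by-uniformly-small-pertubation} to Theorem~\ref{MainTheorem} by showing that the quantity $q$ in condition~\eqref{mainCond} can be made smaller than $1$ simply by requiring $\sup_{t\ge 0}\|Q(t)\|$ to be small enough. Concretely, I would first invoke Theorem~\ref{thm.spec.estimate}(ii), which under the spectral hypothesis on $A$ gives
\[
C:=\int_0^\infty \tau^{\alpha-1}\|E_{\alpha,\alpha}(\tau^\alpha A)\|\;d\tau<\infty.
\]
Then I would set $\varepsilon:=\frac{1}{2C}$ (any constant strictly less than $1/C$ works) and assume $Q$ satisfies~\eqref{uniformCond} with this $\varepsilon$.

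The key estimate is the chain
\[
\int_0^t (t-\tau)^{\alpha-1}\|E_{\alpha,\alpha}((t-\tau)^\alpha A)Q(\tau)\|\;d\tau
\le \Bigl(\sup_{\tau\ge 0}\|Q(\tau)\|\Bigr)\int_0^t (t-\tau)^{\alpha-1}\|E_{\alpha,\alpha}((t-\tau)^\alpha A)\|\;d\tau,
\]
where I have used submultiplicativity of the operator norm. After the substitution $s=t-\tau$ the last integral equals $\int_0^t s^{\alpha-1}\|E_{\alpha,\alpha}(s^\alpha A)\|\;ds \le C$, uniformly in $t\ge 0$. Hence
\[
q=\sup_{t\ge 0}\int_0^t (t-\tau)^{\alpha-1}\|E_{\alpha,\alpha}((t-\tau)^\alpha A)Q(\tau)\|\;d\tau
\le C\sup_{\tau\ge0}\|Q(\tau)\| < C\varepsilon=\tfrac12<1.
\]
Thus condition~\eqref{mainCond} holds, and Theorem~\ref{MainTheorem} yields the asymptotic stability of the trivial solution of~\eqref{mainEq}.

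I do not foresee a serious obstacle here: the argument is essentially a one-line bound combined with the two theorems already established in the excerpt. The only point requiring a little care is the justification that the integral $\int_0^t s^{\alpha-1}\|E_{\alpha,\alpha}(s^\alpha A)\|\;ds$ is bounded \emph{uniformly} in $t$ by the improper integral $C$ — this is immediate since the integrand is nonnegative, so the partial integrals are monotone in $t$ and bounded above by $C$. One should also note that $\varepsilon$ depends only on $A$ and $\alpha$ (through $C$), not on $Q$, which is exactly the form of the statement. If one wants the threshold $\varepsilon$ to be as large as possible one can take any $\varepsilon<1/C$; I would simply remark this rather than optimize.
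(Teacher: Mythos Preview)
Your proposal is correct and matches the paper's own proof essentially line for line: the paper also sets $\varepsilon=\frac{1}{2\int_0^\infty u^{\alpha-1}\|E_{\alpha,\alpha}(u^\alpha A)\|\,du}$ via Theorem~\ref{thm.spec.estimate}(ii) and then observes that \eqref{uniformCond} forces \eqref{mainCond}, invoking Theorem~\ref{MainTheorem}. If anything, you have spelled out the intermediate bound and the change of variables more explicitly than the paper does.
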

\begin{proof}
By virtue of Theorem~\ref{thm.spec.estimate}(ii), we can choose
$$
0< \varepsilon := \frac{1}{2 \int_0^\infty u^{\alpha -1}\| E_{\alpha,\alpha}(u^\alpha A)\| \,du}<\infty.
$$
Clearly, if $Q$ satisfies \eqref{uniformCond} then the condition \eqref{mainCond} holds, hence our theorem follows from
 Theorem~\ref{MainTheorem}.
\end{proof}

Before going to the theorem on stability of the linear system \eqref{mainEq}  in case of decaying $Q$ we need the following auxiliary result which is of independent interest.
\begin{theorem}[Lyapunov stability of finite dimensional linear FDE]\label{thm.d-dim}
Consider a $d$-dimensional linear fractional differential equation on $\R_+$:
\begin{equation}\label{eqn.d-dim}
^{C\!}D_{0+}^\alpha x(t)=B(t)x(t),
\end{equation}
where $B:\R_+\rightarrow \R^{d\times d}$ is a continuous matrix-valued function. 
Then the following statements are equivalent:
%\vspace{-0.2cm}
\begin{itemize}
\item [(i)] The trivial solution of the equation \eqref{eqn.d-dim} is stable;
\item [(ii)] Any solution of \eqref{eqn.d-dim} is bounded on $\R_+$;
\item[(iii)] There exist $d$ linearly independent initial vectors $x_1,x_2,\ldots,x_d\in\R^d$ such that the solutions of \eqref{eqn.d-dim} starting at time 0 at those vectors are bounded on $\R_+$.
\end{itemize}
\end{theorem}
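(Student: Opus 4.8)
The plan is to prove the cycle of implications (i)$\Rightarrow$(ii)$\Rightarrow$(iii)$\Rightarrow$(i), which is the standard way to establish the equivalence of stability, boundedness of all solutions, and boundedness of a fundamental system for a linear equation. The implication (ii)$\Rightarrow$(iii) is trivial: simply take $x_1,\dots,x_d$ to be the standard basis vectors of $\R^d$, and (ii) guarantees that each corresponding solution is bounded. For (i)$\Rightarrow$(ii), I would argue by contraposition or directly: stability means that solutions starting in a $\delta$-ball stay in the $\eps$-ball for all $t\ge 0$; given an arbitrary initial vector $x_0$, linearity of \eqref{eqn.d-dim} (note the solution operator $x_0\mapsto\phi(\cdot,x_0)$ is linear since $B(t)$ acts linearly and solutions are unique) lets me rescale: $\phi(t,x_0)=\frac{\|x_0\|}{\delta}\,\phi\!\left(t,\frac{\delta}{\|x_0\|}x_0\right)$, and since $\frac{\delta}{\|x_0\|}x_0$ has norm exactly $\delta<\delta$... here I should be careful and use $\|x_0\|$ slightly less than $\delta$, say take the scaling factor so that the rescaled initial condition has norm $\delta/2<\delta$, giving $\|\phi(t,x_0)\|\le \frac{2\|x_0\|}{\delta}\eps$ for all $t\ge 0$, hence boundedness.

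The substantive part is (iii)$\Rightarrow$(i). The idea is that $d$ linearly independent initial vectors with bounded solutions generate, by linearity, a bound on all solutions, which then upgrades to stability. Concretely, let $X:\R_+\to\R^{d\times d}$ be the matrix-valued function whose $j$-th column is $\phi(t,x_j)$; write $P$ for the (invertible) matrix with columns $x_1,\dots,x_d$, so $X(0)=P$. By linearity and uniqueness of solutions, for any $x_0\in\R^d$ the solution is $\phi(t,x_0)=X(t)P^{-1}x_0$. Each column of $X(t)$ is bounded on $\R_+$ by hypothesis, so $M:=\sup_{t\ge 0}\|X(t)\|<\infty$ (any matrix norm, using equivalence of norms in finite dimension). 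Then $\|\phi(t,x_0)\|\le M\,\|P^{-1}\|\,\|x_0\|$ for all $t\ge 0$. Given $\eps>0$, choose $\delta:=\eps/(M\|P^{-1}\|+1)$; then $\|x_0\|<\delta$ forces $\|\phi(t,x_0)\|\le\eps$ for all $t\ge 0$, which is precisely stability in the sense of Definition~\ref{DS}.

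The one point that requires genuine care — and which I expect to be the main obstacle to a fully rigorous write-up — is the justification that $\phi(t,x_0)=X(t)P^{-1}x_0$, i.e., that the superposition principle holds for \eqref{eqn.d-dim}. For ODEs this is immediate, but for the Caputo fractional equation one must invoke the variation of constants / fixed point characterization: a function $\psi$ is a solution of \eqref{eqn.d-dim} with $\psi(0)=x_0$ if and only if $\psi(t)=x_0+I_{0+}^\alpha[B(\cdot)\psi(\cdot)](t)$ (equivalently, it is the fixed point of the corresponding Lyapunov--Perron-type operator, cf.\ Theorem~\ref{Var_Const_Form_1} with $f(t,x)=B(t)x$, $A=0$). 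Since this integral fixed-point equation is linear in the pair $(x_0,\psi)$, any linear combination of solutions is again a solution with the correspondingly combined initial value, and uniqueness of solutions (which holds here because $B$ is continuous, hence locally Lipschitz with the locally bounded Lipschitz "constant" $\|B(t)\|$, so Baleanu and Mustafa~\cite{Baleanu} and Tisdell~\cite{Tisdell2012} apply) forces $\phi(t,x_0)=\sum_j c_j\,\phi(t,x_j)$ whenever $x_0=\sum_j c_j x_j$. This delivers the identity $\phi(t,x_0)=X(t)P^{-1}x_0$ and closes the argument. The rest — extracting uniform bounds from finitely many bounded functions and translating a linear bound into the $\eps$-$\delta$ definition — is routine.
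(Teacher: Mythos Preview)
Your proposal is correct and follows essentially the same route as the paper: the same cycle (i)$\Rightarrow$(ii)$\Rightarrow$(iii)$\Rightarrow$(i), with linearity/uniqueness of the solution map doing the real work in (i)$\Rightarrow$(ii) and (iii)$\Rightarrow$(i). The only cosmetic difference is that for (iii)$\Rightarrow$(i) the paper writes $x=\sum_i\alpha_i x_i$ and bounds $\max_i|\alpha_i|$ via a compactness argument on the set $\{(c_1,\dots,c_d):\max_i|c_i|=1\}$, whereas you package the same estimate as $\phi(t,x_0)=X(t)P^{-1}x_0$ and invoke $\|P^{-1}\|$; your justification of superposition through the integral fixed-point equation is in fact more explicit than the paper's one-line appeal to linearity of the Caputo derivative.
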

\begin{proof}
First we note that since $B$ is continuous on $\R_+$ the initial value problem for \eqref{eqn.d-dim} has unique solution existing on the whole $\R_+$ for any given initial value (see Baleanu and Mustafa~\cite[Theorem~2]{Baleanu} and Tisdell~\cite[Theorem~6.4]{Tisdell2012}). The stability of the trivial solution of \eqref{eqn.d-dim} is defined according to Definition~\ref{DS}; this stability is also called {\em Lyapunov stability}.
Due to the linearity of the fractional Caputo differentiation and linearity of \eqref{eqn.d-dim} there is a bijection between the solution space of \eqref{eqn.d-dim} and the vector space $\R^d$ of initial values of  \eqref{eqn.d-dim}.

$(i) \Rightarrow (ii)$: If the trivial solution of \eqref{eqn.d-dim} is stable then any solution started from a suitably small ball around origin must be bounded on $\R_+$. Then (ii) follows by linearity.

$(ii) \Rightarrow (iii)$: Obvious.

$(iii) \Rightarrow (i)$: Assume that $x_1,x_2,\ldots,x_d\in\R^d$ is a basis of $\R^d$. 
For brevity, let  $x_1(t),x_2(t),\ldots,x_d(t)$ denote the solutions of \eqref{eqn.d-dim} starting at time 0 at $x_1,x_2,\ldots,x_d$, respectively. From boundedness of $x_1(t),\dots, x_d(t)$, we have 
\begin{equation}\label{New_Eq1a}
M:=\max_{1\leq i\leq d} \sup_{t\geq 0} \|x_i(t)\| < \infty.
\end{equation}
Define $\mathcal S:= \big\{(c_1,\dots,c_d)\in [-1,1]^d: \max_{1\leq i \leq d} |c_i|=1\big\}$ and a continuous map $\pi: \mathcal S\rightarrow \R$ by 
\[
\pi(c_1,\dots,c_d):=\|c_1x_1 + c_2x_2 + \cdots + c_dx_d\|.
\]
Since $x_1,\dots,x_d$ is a basis of $\R^d$ and $\mathcal S$ is a compact set it follows that
\begin{equation}\label{New_Eq1b}
m:=\min_{(c_1,\dots,c_d)\in \mathcal S}\pi(c_1,\dots,c_d)>0.
\end{equation}
To prove stability of the trivial solution, let $\eps>0$ be arbitrary. Set $\delta:=\frac{m\eps}{2dM}$. Let $x\in \R^d\setminus\{0\}$ be an arbitrary non-zero vector such that $\|x\|\leq \delta$ and $x(t)$ denote the solution of \eqref{eqn.d-dim} starting at time 0 at $x$. The vector $x$ is represented uniquely by $x=\sum_{i=1}^d\alpha _i x_i$. By linearity, we have $x(t)=\sum_{i=1}^d\alpha _i x_i(t)$. Hence, from \eqref{New_Eq1a} we have 
\[
\|x(t)\|
\leq 
\sum_{i=1}^d |\alpha_i| M 
\leq d M \max_{1\leq i\leq d} |\alpha_i|
\quad\hbox{for all } t\in \R_{+}.
\]
On the other hand, by \eqref{New_Eq1b} we have 
\[
\pi\left(\frac{\alpha_1}{\max_{1\leq i\leq d} |\alpha_i|},\dots,\frac{\alpha_d}{\max_{1\leq i\leq d} |\alpha_i|}\right)
=
\frac{\|x\|}{\max_{1\leq i\leq d} |\alpha_i|}
\geq 
m,
\]
which implies that $\max_{1\leq i\leq d} |\alpha_i|\leq \frac{\delta}{m}$. Consequently, 
\[
\|x(t)\|
\leq 
dM \frac{\delta}{m}=\frac{\eps}{2}\quad \hbox{for all } t\in \R_{+},
\]
which completes the proof.
\end{proof}
\begin{theorem}[Stability by decaying perturbation]\label{stability-by-decaying-pertubation}
Assume that the spectrum of the matrix $A$ satisfies the condition
$$
\sigma(A)\subset\left\{\lambda\in \C \setminus \{0\} :|\arg(\lambda)|>\frac{\alpha \pi}{2}\right\}.
$$ 
If the matrix $Q$ is decaying to zero, i.e., 
\begin{equation}\label{decayCond}
\lim_{t\rightarrow\infty} \|Q(t)\|=0,
\end{equation}
then the trivial solution of \eqref{mainEq} is asymptotically stable.
\end{theorem}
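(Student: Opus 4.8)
The plan is to combine the variation of constants formula for \eqref{mainEq} (i.e.\ the Lyapunov--Perron operator, cf.\ Theorem~\ref{Var_Const_Form_1}) with the equivalence \emph{stability} $\Leftrightarrow$ \emph{boundedness of every solution} proved in Theorem~\ref{thm.d-dim}. Because a fractional equation does not restart cleanly at a positive time (the memory accumulated on $[0,T]$ cannot be discarded), one cannot simply invoke Theorem~\ref{MainTheorem} after a time shift; instead I would work directly with the global convolution integral and split it at a time $T$ chosen, using \eqref{decayCond}, so large that $\theta:=\bigl(\sup_{t\ge T}\|Q(t)\|\bigr)\,C<1$, where $C:=\int_0^\infty u^{\alpha-1}\|E_{\alpha,\alpha}(u^\alpha A)\|\,du$ is finite by Theorem~\ref{thm.spec.estimate}(ii). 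By linearity of \eqref{mainEq} (linearity of the Caputo derivative and the scaling $\phi(\cdot,cx_0)=c\,\phi(\cdot,x_0)$) it suffices to show that every solution is bounded on $\R_+$ and tends to $0$ at infinity; asymptotic stability of the trivial solution then follows from Theorem~\ref{thm.d-dim}.

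\textbf{Step 1 (boundedness).} Let $\phi:=\phi(\cdot,x_0)$ be an arbitrary solution; being continuous, it is bounded on $[0,T]$, say $\sup_{t\in[0,T]}\|\phi(t)\|=M_0$. For $t\ge T$ I would write, from the Lyapunov--Perron representation, $\phi(t)=E_\alpha(t^\alpha A)x_0+J_1(t)+J_2(t)$ with $J_1(t)=\int_0^T(t-\tau)^{\alpha-1}E_{\alpha,\alpha}((t-\tau)^\alpha A)Q(\tau)\phi(\tau)\,d\tau$ and $J_2(t)=\int_T^t(t-\tau)^{\alpha-1}E_{\alpha,\alpha}((t-\tau)^\alpha A)Q(\tau)\phi(\tau)\,d\tau$. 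Using $\sup_{t\ge0}\|E_\alpha(t^\alpha A)\|<\infty$ (Theorem~\ref{thm.spec.estimate}(i), as in the proof of Theorem~\ref{MainTheorem}) and the substitution $u=t-\tau$ together with $\int_0^\infty u^{\alpha-1}\|E_{\alpha,\alpha}(u^\alpha A)\|\,du=C$, one bounds $\|E_\alpha(t^\alpha A)x_0\|$ by a constant, $\|J_1(t)\|$ by $M_0\,\bigl(\sup_{t\in[0,T]}\|Q(t)\|\bigr)\,C$, and $\|J_2(t)\|$ by $\bigl(\sup_{\tau\ge T}\|Q(\tau)\|\bigr)\,C\,\sup_{T\le\tau\le t}\|\phi(\tau)\|$. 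Putting $N(t):=\sup_{T\le\tau\le t}\|\phi(\tau)\|$ and taking the supremum over $[T,t]$ yields $N(t)\le c_0+\theta N(t)$ with $c_0$ a constant independent of $t$, whence $N(t)$ is bounded uniformly in $t\ge T$ since $\theta<1$. Thus $\phi$ is bounded on $\R_+$, and Theorem~\ref{thm.d-dim} gives stability of the trivial solution.

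\textbf{Step 2 (decay).} Set $a:=\limsup_{t\to\infty}\|\phi(t)\|$, finite by Step~1, and suppose $a>0$. Fix $\eta>0$ and choose $T_1\ge T$ with $\|\phi(t)\|<a+\eta$ for all $t\ge T_1$. Splitting $\phi(t)$ at $T_1$ as above and letting $t\to\infty$: the term $E_\alpha(t^\alpha A)x_0$ tends to $0$ by Theorem~\ref{thm.spec.estimate}(i); the integral over $[0,T_1]$ is dominated by $\bigl(\sup_{s\in[0,T_1]}\|\phi(s)\|\bigr)\bigl(\sup_{s\in[0,T_1]}\|Q(s)\|\bigr)\int_{t-T_1}^{t}u^{\alpha-1}\|E_{\alpha,\alpha}(u^\alpha A)\|\,du$, which tends to $0$ because the tail of the convergent integral $C$ vanishes; and the integral over $[T_1,t]$ is at most $(a+\eta)\bigl(\sup_{\tau\ge T_1}\|Q(\tau)\|\bigr)\,C\le(a+\eta)\theta$. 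Taking $\limsup_{t\to\infty}$ gives $a\le(a+\eta)\theta$, and since $\eta>0$ is arbitrary and $\theta<1$ this forces $a=0$. Hence $\phi(t)\to0$; by linearity the same holds for every solution, so together with the stability from Step~1 the trivial solution of \eqref{mainEq} is asymptotically stable.

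The main obstacle is precisely the ``memory'' term $J_1$: because there is no semigroup property, the influence of $Q$ on $[0,T]$ persists in the representation of $\phi(t)$ for every $t$, and its control relies entirely on Theorem~\ref{thm.spec.estimate}(ii), which simultaneously makes $\int_{t-T}^t u^{\alpha-1}\|E_{\alpha,\alpha}(u^\alpha A)\|\,du$ bounded (needed in Step~1) and vanishing as $t\to\infty$ (needed in Step~2). The remaining estimates are a routine absorption and $\limsup$ argument patterned on the proof of Theorem~\ref{MainTheorem}.
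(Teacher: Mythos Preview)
Your proof is correct, but the boundedness step (Step~1) follows a genuinely different route from the paper's. The paper does not use an absorption inequality $N(t)\le c_0+\theta N(t)$; instead it introduces a Bielecki-type weighted norm $\|y\|_\beta:=\sup_{t\ge 0}\|y(t)\|/\beta(t)$ on $C_\infty(\R^d)$, with $\beta(t)=E_\alpha(5Mt^\alpha)$ on $[0,T]$ and constant afterwards, and shows that the Lyapunov--Perron operator $\mathcal{T}_x$ is a $\frac{1}{2}$-contraction in this equivalent norm, so its unique fixed point lies in $C_\infty(\R^d)$ and every solution is automatically bounded. Your direct splitting-plus-absorption argument is more elementary and avoids building a weighted space; it exploits the a priori continuity of $\phi$ on each compact $[T,t]$ so that $N(t)<\infty$ can be absorbed. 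The paper's weighted-norm approach is heavier machinery but packages the estimate as a contraction, which they then recycle verbatim for the nonlinear Theorem~\ref{stability-by-decaying-lipschitz-pertubation} (where the solution map is no longer linear). For Step~2 your $\limsup$ argument is exactly the mechanism the paper uses (it defers to ``arguments similar to the second part of the proof of Theorem~\ref{MainTheorem}''); your version with arbitrary $\eta>0$ and the single contraction factor $\theta$ is in fact a little cleaner than the paper's specific choice $\eta=\frac{1-q}{2q+1}a$.
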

\begin{proof}
Fix an arbitrary $x\in\R^d$.
First, we show that any solution of \eqref{mainEq} is bounded. To this end we equip the space $C_\infty(\R^d)$ of bounded continuous vector-functions with a new norm $\|\cdot\|_\beta$ which is equivalent to the norm $\|\cdot\|_\infty$ so that  $(C_\infty(\R^d),\|\cdot\|_\beta)$ is a new Banach space, in which the Lyapunov--Perron operator  associated with \eqref{mainEq} is a contraction. Note that since $Q$ is continuous and decaying it is uniformly bounded, hence by virtue of Theorem~\ref{thm.spec.estimate} we can find a constant $M>1$ such that
%\begin{align*}
\begin{equation}\label{eqn.big-M}
\begin{array}{cll}
%\sup_{t\geq 0} \|Q(t)\| &<& M, \\
%\sup_{t\geq 0} \|E_\alpha(t^\alpha A)\| &<& M,\\
\sup_{t\geq 0} \|E_{\alpha,\alpha}(t^\alpha A)\|\times \sup_{t\geq 0} \|Q(t)\| &\le & \frac{M}{\Gamma(\alpha)},\\[1,5ex]
\sup_{t\geq 0} \int_0^t \tau^{\alpha-1}\|E_{\alpha,\alpha}(\tau^\alpha A)\|\, d\tau& \le & M.\\
%\sup_{t\geq 0} \|Q(t)\| \times \sup_{t\geq 0} \int_0^t u^{\alpha-1}\|E_{\alpha,\alpha}(u^\alpha A)\|\, du &<& M.
\end{array}
\end{equation}
%\end{align*}
By (\ref{decayCond}) we can find $T>0$ such that
\begin{equation}\label{eqn.tail}
\sup_{t\geq T} \|Q(t)\| < \frac{1}{5M}.
\end{equation}
We introduce a function $\beta(\cdot): \R^+\rightarrow \R^+$ by the formula
$$
\beta(t) := \left\{ \begin{array}{ll}
E_\alpha(5Mt^\alpha) &\hbox{if}\;  0 \leq t \leq T,\\[1,5ex]
E_\alpha(5MT^\alpha)  & \hbox{if}\; t\geq T,
\end{array}
\right.
$$
and define a norm $\|\cdot\|_\beta$ in the space  $C_\infty(\R^d)$ of bounded continuous vector-functions
by setting $\|y\|_\beta :=  \sup_{t\geq 0}\frac{\|y(t)\|}{\beta(t)}$ for any $y\in C_\infty(\R^d)$. This norm is equivalent to the   sup norm $\|\cdot\|_\infty$ because
$$
\frac{1}{\beta(T)} \|y\|_\infty 
\leq 
\|y\|_{\beta}
\leq \|y\|_\infty\quad \hbox{for all } y \in C_{\infty}(\R^d).
$$
Thus, the space $(C_\infty(\R^d), \|\cdot\|_\beta)$ is a Banach  space.
 Now, fix an arbitrary $x\in\R^d$,  we show that the Lyapunov--Perron operator  associated with \eqref{mainEq} defined as in \eqref{eqn.LPO.nonlinear} is a contraction in 
$(C_\infty(\R^d), \|\cdot\|_\beta)$.
For any $\xi\in C_\infty(\R^d)$, due to the assumptions of the theorem, taking into account Theorem~\ref{thm.spec.estimate} we have
$$
(\mathcal{T}_{x}\xi)(t)=E_\alpha(t^\alpha A)\,x+\int_0^t (t-\tau)^{\alpha-1}E_{\alpha,\alpha}((t-\tau)^\alpha A)Q(\tau)\xi(\tau)\;d\tau\in C_\infty(\R^d),
$$
hence $\mathcal{T}_{x}$ is a self map of $C_\infty(\R^d)$.
Now, for any $\xi,\tilde\xi\in C_\infty(\R^d)$ we have
$$
\mathcal{T}_x(\xi)(t)-\mathcal{T}_x(\tilde{\xi})(t)=\int_0^t (t-\tau)^{\alpha-1} E_{\alpha,\alpha}((t-\tau)^\alpha A)Q(\tau)[\xi(\tau)-\tilde{\xi}(\tau)]\;d\tau.
$$
For $0\leq t\leq T$,  by \eqref{eqn.big-M} we have 
\begin{eqnarray}%\label{eqn.estimateT}
%\max_{0\leq t\leq T}
\frac{\|\mathcal{T}_x(\xi)(t)-\mathcal{T}_x(\tilde{\xi})(t)\|}{\beta(t)}
&\leq&\frac{\|\xi-\tilde \xi\|_{\beta}}{\beta(t)}\frac{M}{\Gamma(\alpha)}
\int_0^t (t-\tau)^{\alpha-1} \beta(\tau)\;d\tau\nonumber\\
&=&\frac{\|\xi-\tilde \xi\|_{\beta}}{E_\alpha(5Mt^\alpha)}\frac{M}{\Gamma(\alpha)}
\int_0^t (t-\tau)^{\alpha-1} E_\alpha(5M\tau^\alpha)\;d\tau\nonumber\\
&\leq&\frac{M}{5M}\|\xi-\tilde \xi\|_{\beta}=\frac{\|\xi-\tilde \xi\|_{\beta}}{5}.\label{eqn.tail1}
\end{eqnarray}
For $t>T$ we have
\begin{eqnarray*}
\frac{\|\mathcal{T}_x(\xi)(t)-\mathcal{T}_x(\tilde{\xi})(t)\|}{\beta(t)}
&=& \frac{\|\mathcal{T}_x(\xi)(t)-\mathcal{T}_x(\tilde{\xi})(t)\|}{E_\alpha(5MT^\alpha)}\\
&&\hspace*{-3cm} = \; \frac{1}{E_\alpha(5MT^\alpha)}\|\int_0^t (t-\tau)^{\alpha-1}E_{\alpha,\alpha}((t-\tau)^\alpha A)Q(\tau)(\xi(\tau)-\tilde\xi(\tau))\;d\tau\|\\
&&\hspace*{-3cm} \leq \;
\frac{1}{E_\alpha(5MT^\alpha)}\|\int_0^T (t-\tau)^{\alpha-1}E_{\alpha,\alpha}((t-\tau)^\alpha A)Q(\tau)(\xi(\tau)-\tilde\xi(\tau))\;d\tau\|\\
&&\hspace*{-2.5cm} +\; \frac{1}{E_\alpha(5MT^\alpha)}\|\int_T^t (t-\tau)^{\alpha-1}E_{\alpha,\alpha}((t-\tau)^\alpha A)Q(\tau)(\xi(\tau)-\tilde\xi(\tau))\;d\tau\|.
\end{eqnarray*}
Therefore, using \eqref{eqn.big-M} and \eqref{eqn.tail} we get
\begin{eqnarray*}
\frac{\|\mathcal{T}_x(\xi)(t)-\mathcal{T}_x(\tilde{\xi})(t)\|}{\beta(t)}
&\leq& \frac{M}{E_\alpha(5MT^\alpha)}\frac{1}{\Gamma(\alpha)}\int_0^T (t-\tau)^{\alpha-1}\|\xi(\tau)-\tilde\xi(\tau)\|\;d\tau\\
&&+\;\|\xi-\tilde \xi\|_{\beta}\frac{1}{5M}
\int_T^t (t-\tau)^{\alpha-1}\|E_{\alpha,\alpha}((t-\tau)^\alpha A)\|\;d\tau\\
&\leq& \frac{M}{E_\alpha(5MT^\alpha)} \frac{1}{\Gamma(\alpha)}\int_0^T (T-\tau)^{\alpha-1}\|\xi(\tau)-\tilde\xi(\tau)\|\;d\tau\\
&&+\;\|\xi-\tilde \xi\|_{\beta}\frac{1}{5M}\times M.
\end{eqnarray*}
Consequently, for all $t\geq T$ we have
\begin{equation}\label{eqn.tail2}
\frac{\|\mathcal{T}_x(\xi)(t)-\mathcal{T}_x(\tilde{\xi})(t)\|}{\beta(t)}
\leq M\times \frac{1}{5M} \|\xi-\tilde \xi\|_{\beta} + \frac{\|\xi-\tilde \xi\|_{\beta}}{5}
\leq \frac{1}{2}\|\xi-\tilde \xi\|_{\beta}.
\end{equation}
Combining \eqref{eqn.tail1} with \eqref{eqn.tail2}, we get 
for all $t\geq 0$ the inequality
$$
\frac{\|\mathcal{T}_x(\xi)(t)-\mathcal{T}_x(\tilde{\xi})(t)\|}{\beta(t)}
\leq \frac{1}{2}\|\xi-\tilde \xi\|_{\beta}.
$$
Hence,
\begin{equation}\label{eqn.new.linear}
\|\mathcal{T}_x(\xi)-\mathcal{T}_x(\tilde{\xi})\|_{\beta}
 \leq
  \frac{1}{2}\|\xi-\tilde \xi\|_{\beta},
\end{equation}
what shows that $\mathcal{T}_x$ is a contraction of the Banach space $(C_\infty(\R^d), \|\cdot\|_\beta)$. Consequently, 
$\mathcal{T}_x$ has an unique fixed point in $C_\infty(\R^d)$ which is the unique bounded solution of 
\eqref{mainEq} starting from the initial value $x\in\R^d$. Thus we have shown that any solution of \eqref{mainEq}  is bounded. Therefore, by virtue of Theorem~\ref{thm.d-dim} the trivial solution of \eqref{mainEq} is Lyapunov stable.

Next we show that any solution of \eqref{mainEq} tends to zero. This can be done by using arguments similar to that of the second part of the proof of Theorem~\ref{MainTheorem}.
Consequently, the trivial solution of \eqref{mainEq}  is asymptotically stable.
\end{proof}
\begin{remark}
A closer inspection of the proof of Theorem~\ref{stability-by-decaying-pertubation} shows that for the asymptotic behavior of $Q$ at infinity we only need \eqref{eqn.tail}, hence actually instead the condition \eqref{decayCond} in the formulation of Theorem~\ref{stability-by-decaying-pertubation}  we may only require $\lim_{t\rightarrow\infty} \| Q(t)\|$ be less than a positive number depending on $A$ and $\sup_{t\geq 0} \|Q(t)\|$, i.e., a weaker condition than \eqref{decayCond}.
\end{remark}
%
%\begin{remark}
%The proof of the Theorem \eqref{stability-by-decaying-pertubation} is also true in the case $\alpha=1$.
%\end{remark}
%
\begin{remark}
In the paper \cite{Qian}, the authors investigated the stability of the linear fractional system with Riemann--Liouville derivative and Caputo derivative, similar to the systems treated in our paper. 
Using a Gronwall's type inequality they obtained some results on asymptotic stability of the trivial solution of the perturbed linear system under some assumptions on the spectrum of the original constant matrix and boundedness of the linear perturbation (see Theorem 4.1(a) and Remark 4.1(a)). 
 Unfortunately, their proof contains some mistakes with application of the Gronwall's inequality  (see \cite[line -7, page 869]{Qian}). This leads to the fact that the statements in Theorem 4.1(a) and Remark 4.1(a) of \cite{Qian} are false. For a counterexample let us consider a scalar fractional differential equation involving Riemann--Liouville derivative of order $\alpha \in (0,1)$ as below:
\begin{equation}\label{ex}
^{R\!}D_{0+}^\alpha x(t)=[-\lambda+b(t)]x(t),
\end{equation}
with the initial condition
\[
\lim_{t\to 0}{}^{R\!}D_{0+}^{\alpha-1} x(t)=x_0.
\]
Assume that $\lambda >0$ and $b(t)\equiv 2\lambda$ on the half line $[0,\infty)$. It is well known that the solution of \eqref{ex} on $(0,\infty)$ is $ t^{\alpha-1} E_{\alpha,\alpha}(\lambda t^\alpha)\, x_0$ (see Podlubny~\cite[Example~4.3, p.~140]{Podlubny}). Since $\lambda>0$, due to the asymptotic behavior of the Mittag-Leffler function $E_{\alpha,\alpha}(\lambda t^\alpha)$ at the infinity, in case $x_0\not= 0$ the solution tends to the infinity as $t$ tends to the infinity. This shows that Theorem 4.1(a) of \cite{Qian} is false. Similarly, Remark 4.1(a) of \cite{Qian} is also false.
\end{remark}

\section{General theory of asymptotic stability of linear FDEs with constant coefficients and small nonlinear time dependent perturbations}\label{sec.general.theory}
 In this section we investigate the asymptotic stability of the nonlinear perturbed system \eqref{mainEq0} with conditions \eqref{f0-Cond} and \eqref{LipCond}, i.e., the equation
$$
^{C\!}D_{0+}^\alpha x(t)=Ax(t)+f(t,x(t))
$$
with the initial condition
\[
x(0)=x_0,
\]
where $A\in  \R^{d\times d}$ is a  constant ($d\times d$)-matrix and 
$f:[0,\infty)\times \R^d\rightarrow \R^d$ is a continuous vector-valued function such that 
$$
f(t,0)=0\qquad \hbox{for all}\quad t\geq 0,
$$
and there exists a continuous function $K:[0,\infty)\rightarrow \R_+$ satisfying
$$
\|f(t,x)-f(t,y)\|\le K(t)\|x-y\| \qquad\hbox{for all}\quad t\geq 0\quad\hbox{and}\quad x,y\in \R^d.
$$
%From \eqref{f0-Cond}--\eqref{LipCond} it follows that the fractional differential equation \eqref{mainEq0} has unique solution for any given initial value (see Baleanu and Mustafa~\cite[Theorem~2]{Baleanu}), and $x\equiv 0$ is the trivial solution of \eqref{mainEq0}.
We will show that the results for the linear case presented in Section~\ref{sec.mainresults} can be easily generalized to the general nonlinear case of this section.
Recall that the definition of stability and asymptotic stability of the trivial solution of \eqref{mainEq0} is just the same as the definition for the linear case given in 
Definition~\ref{DS}.

\begin{theorem}[Robust Stability for Nonlinear Equation]\label{MainTheorem.nonlinear}
Assume that the spectrum of the matrix $A$ satisfies the condition
$$
\sigma(A)\subset\left\{\lambda\in \C \setminus \{0\} :|\arg(\lambda)|>\frac{\alpha \pi}{2}\right\},
$$
and, in addition, $K(\cdot)$ satisfies
\begin{equation}\label{mainCond.nonlinear}
q:=\sup_{t\ge 0} \int_0^t  (t-\tau)^{\alpha-1}\|E_{\alpha,\alpha}((t-\tau)^\alpha A) \|K(\tau)\;d\tau <1.
\end{equation}
Then the trivial solution of \eqref{mainEq0} is asymptotically stable.
\end{theorem}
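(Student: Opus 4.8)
The plan is to transcribe the proof of Theorem~\ref{MainTheorem} almost verbatim, replacing the linear perturbation term $Q(\tau)\xi(\tau)$ by the nonlinear term $f(\tau,\xi(\tau))$ and using hypotheses \eqref{f0-Cond} and \eqref{LipCond} wherever linearity of $Q$ was used. Fix $\eps>0$ arbitrary. By Theorem~\ref{thm.spec.estimate}(i) the number $\kappa:=\sup_{t\ge 0}\|E_\alpha(t^\alpha A)\|$ is finite, and since $E_\alpha(0)=I$ it satisfies $\kappa\ge 1$; hence $\delta:=(1-q)\eps/\kappa$ lies in $(0,\eps)$, where $q$ is as in \eqref{mainCond.nonlinear}. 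Choose and fix $x\in\R^d$ with $\|x\|\le\delta$, and let $\mathcal{T}_x$ be the Lyapunov--Perron operator associated with \eqref{mainEq0}, given by \eqref{eqn.LPO.nonlinear}.

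First I would show $\mathcal{T}_x$ maps the closed ball $B_{C_\infty}(0,\eps)$ into itself. The only point differing from the linear case is the pointwise estimate $\|f(\tau,\xi(\tau))\|=\|f(\tau,\xi(\tau))-f(\tau,0)\|\le K(\tau)\|\xi(\tau)\|\le K(\tau)\eps$ for $\xi\in B_{C_\infty}(0,\eps)$, which comes straight from \eqref{f0-Cond} and \eqref{LipCond}. Substituting into \eqref{eqn.LPO.nonlinear} and invoking \eqref{mainCond.nonlinear} gives $\|\mathcal{T}_x(\xi)(t)\|\le\kappa\delta+\eps q\le\eps$ for all $t\ge 0$, so $\mathcal{T}_x(B_{C_\infty}(0,\eps))\subset B_{C_\infty}(0,\eps)$. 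For the contraction property, given $\xi,\tilde\xi\in B_{C_\infty}(0,\eps)$ I subtract the two integral expressions and apply \eqref{LipCond} pointwise to get $\|\mathcal{T}_x(\xi)(t)-\mathcal{T}_x(\tilde\xi)(t)\|\le\|\xi-\tilde\xi\|_\infty\int_0^t(t-\tau)^{\alpha-1}\|E_{\alpha,\alpha}((t-\tau)^\alpha A)\|K(\tau)\,d\tau\le q\|\xi-\tilde\xi\|_\infty$, so $\mathcal{T}_x$ is a contraction on $B_{C_\infty}(0,\eps)$. By the Contraction Mapping Principle it has a unique fixed point $\xi$ there, and by Theorem~\ref{Var_Const_Form_1} this $\xi$ coincides with the solution $\phi(\cdot,x)$ of \eqref{mainEq0}; hence $\|\phi(t,x)\|\le\eps$ for all $t\ge 0$, which is stability.

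For attractivity I would copy the second half of the proof of Theorem~\ref{MainTheorem}. Put $a:=\limsup_{t\to\infty}\|\xi(t)\|$ and suppose $a>0$. Pick $T>0$ with $\|\xi(t)\|\le a+\frac{1-q}{2q+1}a$ for $t\ge T$, and split the fixed-point identity $\xi=\mathcal{T}_x\xi$ as $\int_0^T+\int_T^t$. On $[0,T]$ bound $\|f(\tau,\xi(\tau))\|\le(\max_{[0,T]}K)\,\eps$ and, via the change of variable $u=t-\tau$, write $\int_0^T(t-\tau)^{\alpha-1}\|E_{\alpha,\alpha}((t-\tau)^\alpha A)\|\,d\tau=\int_{t-T}^t u^{\alpha-1}\|E_{\alpha,\alpha}(u^\alpha A)\|\,du$, which tends to $0$ as $t\to\infty$ by Theorem~\ref{thm.spec.estimate}(ii); so this piece contributes $0$ to the $\limsup$. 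On $[T,t]$ use $\|f(\tau,\xi(\tau))\|\le K(\tau)\|\xi(\tau)\|\le K(\tau)(a+\frac{1-q}{2q+1}a)$ together with \eqref{mainCond.nonlinear}, obtaining $a\le(a+\frac{1-q}{2q+1}a)\,q=a\frac{2q+q^2}{2q+1}<a$, a contradiction. Hence $a=0$, i.e. $\lim_{t\to\infty}\phi(t,x)=0$ whenever $\|x\|\le\delta$, which together with stability yields asymptotic stability.

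I do not expect a genuine obstacle: the argument is a line-by-line adaptation of Theorem~\ref{MainTheorem}, and the single new ingredient — turning the linear bound $\|Q(\tau)\xi(\tau)\|\le\|Q(\tau)\|\,\|\xi(\tau)\|$ into $\|f(\tau,\xi(\tau))\|\le K(\tau)\|\xi(\tau)\|$ — is supplied immediately by $f(\tau,0)=0$ plus the global Lipschitz condition \eqref{LipCond}. The only mild point of care is ensuring that every time one estimates $\|f(\tau,\xi(\tau))\|$ the argument $\xi(\tau)$ lies in the region where the relevant bound on $\|\xi\|$ holds; this is automatic since the fixed point stays in $B_{C_\infty}(0,\eps)$ throughout.
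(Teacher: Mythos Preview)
Your proposal is correct and follows exactly the approach the paper itself takes: the paper's proof of this theorem simply states that one makes the obvious changes to the proof of Theorem~\ref{MainTheorem}, and your write-up carries out precisely those changes, using \eqref{f0-Cond} and \eqref{LipCond} to replace the linear bound $\|Q(\tau)\xi(\tau)\|\le\|Q(\tau)\|\,\|\xi(\tau)\|$ by $\|f(\tau,\xi(\tau))\|\le K(\tau)\|\xi(\tau)\|$.
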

\begin{proof}
We need to just make some obvious changes to the proof of Theorem~\ref{MainTheorem} to get a proof of this theorem.
\end{proof}

\begin{theorem}[Stability by uniformly small Lipschitz perturbation]\label{stability-by-uniformly-small-lipschitz-pertubation}
Assume that the spectrum of the matrix $A$ satisfying
$$
\sigma(A)\subset \left\{\lambda\in \C \setminus \{0\} :|\arg(\lambda)|>\frac{\alpha \pi}{2}\right\}.
$$ 
Then there exists a positive number $\varepsilon>0$ such that if $K$ satisfies
\begin{equation}\label{uniformCond.lip}
\sup_{t\ge 0}  K(t) < \varepsilon,
\end{equation}
the trivial solution of \eqref{mainEq0} is asymptotically stable.
\end{theorem}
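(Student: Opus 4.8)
The plan is to mirror exactly the structure of the proof of Theorem~\ref{stability-by-uniformly-small-pertubation}, which derived the uniform-smallness result for the linear system from the robust-stability Theorem~\ref{MainTheorem}. Here the analogue is to derive Theorem~\ref{stability-by-uniformly-small-lipschitz-pertubation} from Theorem~\ref{MainTheorem.nonlinear}. The key observation is that the quantity $q$ appearing in the hypothesis \eqref{mainCond.nonlinear} of Theorem~\ref{MainTheorem.nonlinear} factors through $\sup_{t\ge 0} K(t)$ in the same way that $q$ in \eqref{mainCond} factored through $\sup_{t\ge 0}\|Q(t)\|$.

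\textbf{Key steps.} First I would invoke Theorem~\ref{thm.spec.estimate}(ii), whose hypothesis is precisely the spectral condition assumed here, to conclude that
\[
\int_0^\infty u^{\alpha-1}\|E_{\alpha,\alpha}(u^\alpha A)\|\,du < \infty,
\]
and that this integral is strictly positive (since $E_{\alpha,\alpha}(0) = 1/\Gamma(\alpha) \neq 0$, the integrand is positive near $0$). Then I would set
\[
\varepsilon := \frac{1}{2\int_0^\infty u^{\alpha-1}\|E_{\alpha,\alpha}(u^\alpha A)\|\,du} \in (0,\infty).
\]
Second, assuming $\sup_{t\ge 0} K(t) < \varepsilon$, I would estimate, for every $t \ge 0$, using the change of variable $u = t-\tau$,
\[
\int_0^t (t-\tau)^{\alpha-1}\|E_{\alpha,\alpha}((t-\tau)^\alpha A)\|K(\tau)\,d\tau
\le \Bigl(\sup_{\tau\ge 0}K(\tau)\Bigr)\int_0^t u^{\alpha-1}\|E_{\alpha,\alpha}(u^\alpha A)\|\,du
< \varepsilon \int_0^\infty u^{\alpha-1}\|E_{\alpha,\alpha}(u^\alpha A)\|\,du = \tfrac12,
\]
so that $q \le \tfrac12 < 1$, i.e. the hypothesis \eqref{mainCond.nonlinear} of Theorem~\ref{MainTheorem.nonlinear} holds. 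Third, I would apply Theorem~\ref{MainTheorem.nonlinear} directly to conclude that the trivial solution of \eqref{mainEq0} is asymptotically stable.

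\textbf{Main obstacle.} There is essentially no obstacle: the proof is a verbatim adaptation of the proof of Theorem~\ref{stability-by-uniformly-small-pertubation}, with $\|Q(\tau)\|$ replaced by $K(\tau)$ throughout and Theorem~\ref{MainTheorem} replaced by Theorem~\ref{MainTheorem.nonlinear}. The only point that deserves a word of care is ensuring that the $\varepsilon$ chosen is strictly positive and finite, which follows from the two parts of Theorem~\ref{thm.spec.estimate} (finiteness from (ii), and positivity of the integral because the Mittag-Leffler integrand does not vanish identically). Given the way the paper telescopes the nonlinear results onto the linear ones (cf.\ the one-line proof of Theorem~\ref{MainTheorem.nonlinear}), I would expect the author's proof to consist of little more than the display defining $\varepsilon$ and the sentence "hence \eqref{mainCond.nonlinear} holds and the claim follows from Theorem~\ref{MainTheorem.nonlinear}."
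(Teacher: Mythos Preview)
Your proposal is correct and follows exactly the paper's approach: the paper's proof simply says to choose $\varepsilon$ as in the proof of Theorem~\ref{stability-by-uniformly-small-pertubation}, observe that \eqref{uniformCond.lip} then implies \eqref{mainCond.nonlinear}, and apply Theorem~\ref{MainTheorem.nonlinear}. Your write-up even anticipates correctly that the paper's proof is essentially a one-liner.
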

\begin{proof}
By a suitable choice of $\varepsilon$ as in the proof of Theorem \ref{stability-by-uniformly-small-pertubation}, from \eqref{uniformCond.lip} we get \eqref{mainCond.nonlinear}, and Theorem~\ref{MainTheorem.nonlinear} is applicable.
\end{proof}
 
To conclude the section we formulate and prove a theorem on asymptotic stability of \eqref{mainEq0} under the condition of decaying Lipschitz constant.
\begin{theorem}[Stability by decaying Lipschitz perturbation]\label{stability-by-decaying-lipschitz-pertubation}
Assume that the spectrum of the matrix $A$ satisfies the condition
$$
\sigma(A)\subset\left\{\lambda\in \C \setminus \{0\} :|\arg(\lambda)|>\frac{\alpha \pi}{2}\right\}.
$$ 
If $K$ is decaying to zero, i.e., 
\begin{equation}\label{decayCond.lip}
\lim_{t\rightarrow\infty} K(t) = 0,
\end{equation}
then the trivial solution of \eqref{mainEq0} is asymptotically stable.
\end{theorem}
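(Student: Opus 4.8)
The plan is to mirror the structure of the proof of Theorem~\ref{stability-by-decaying-pertubation}, replacing the linear perturbation $Q(\tau)\xi(\tau)$ by the nonlinear term $f(\tau,\xi(\tau))$ and using the Lipschitz bound \eqref{LipCond} together with $f(t,0)=0$ in place of the operator norm bound $\|Q(\tau)\xi(\tau)\|\le \|Q(\tau)\|\,\|\xi(\tau)\|$. Concretely, since $f(\tau,0)=0$ we have $\|f(\tau,\xi(\tau))\|\le K(\tau)\|\xi(\tau)\|$, so the Lyapunov--Perron operator $\mathcal{T}_x$ associated with \eqref{mainEq0}, given by \eqref{eqn.LPO.nonlinear}, satisfies estimates of exactly the same shape as in the linear case, with $K$ playing the role of $\|Q(\cdot)\|$. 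First I would invoke Theorem~\ref{thm.spec.estimate} to fix a constant $M>1$ with $\sup_{t\ge 0}\|E_{\alpha,\alpha}(t^\alpha A)\|\cdot \sup_{t\ge 0}K(t)\le M/\Gamma(\alpha)$ and $\sup_{t\ge 0}\int_0^t\tau^{\alpha-1}\|E_{\alpha,\alpha}(\tau^\alpha A)\|\,d\tau\le M$ (note $K$ is bounded because it is continuous and decaying by \eqref{decayCond.lip}), then pick $T>0$ with $\sup_{t\ge T}K(t)<\frac{1}{5M}$ using \eqref{decayCond.lip}.

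Next I would introduce the very same weight $\beta(t):=E_\alpha(5Mt^\alpha)$ for $0\le t\le T$ and $\beta(t):=E_\alpha(5MT^\alpha)$ for $t\ge T$, and the equivalent norm $\|y\|_\beta:=\sup_{t\ge 0}\|y(t)\|/\beta(t)$ on $C_\infty(\R^d)$, making $(C_\infty(\R^d),\|\cdot\|_\beta)$ a Banach space. For a fixed $x\in\R^d$, one first checks $\mathcal{T}_x$ maps $C_\infty(\R^d)$ into itself: $\|E_\alpha(t^\alpha A)x\|$ is bounded by Theorem~\ref{thm.spec.estimate}(i), and the convolution integral is bounded because $\|f(\tau,\xi(\tau))\|\le \sup_\tau K(\tau)\cdot\|\xi\|_\infty$ and $\int_0^\infty\tau^{\alpha-1}\|E_{\alpha,\alpha}(\tau^\alpha A)\|\,d\tau<\infty$. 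For the contraction estimate, for $\xi,\tilde\xi\in C_\infty(\R^d)$ we have $\|f(\tau,\xi(\tau))-f(\tau,\tilde\xi(\tau))\|\le K(\tau)\|\xi(\tau)-\tilde\xi(\tau)\|\le K(\tau)\beta(\tau)\|\xi-\tilde\xi\|_\beta$, and then the computation splitting into $0\le t\le T$ and $t>T$ proceeds verbatim as in \eqref{eqn.tail1}--\eqref{eqn.tail2}, using the key elementary inequality $\frac{M}{\Gamma(\alpha)}\int_0^t(t-\tau)^{\alpha-1}E_\alpha(5M\tau^\alpha)\,d\tau\le \frac{1}{5}E_\alpha(5Mt^\alpha)$ (which follows from $I_{0+}^\alpha E_\alpha(c\,t^\alpha)=\frac{1}{c}(E_\alpha(c\,t^\alpha)-1)$ with $c=5M$). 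This yields $\|\mathcal{T}_x(\xi)-\mathcal{T}_x(\tilde\xi)\|_\beta\le\frac12\|\xi-\tilde\xi\|_\beta$, so by the Contraction Mapping Principle $\mathcal{T}_x$ has a unique fixed point in $C_\infty(\R^d)$, which by Theorem~\ref{Var_Const_Form_1} is the unique bounded solution of \eqref{mainEq0} with $x(0)=x$; hence every solution of \eqref{mainEq0} is bounded on $\R_+$.

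From boundedness of all solutions one deduces Lyapunov stability of the trivial solution. Here one cannot directly cite Theorem~\ref{thm.d-dim}, since that result is for linear equations and exploits the linear structure (superposition); instead I would argue via continuous dependence: the fixed point $\xi=\mathcal{T}_x(\xi)$ depends on $x$ only through the term $E_\alpha(t^\alpha A)x$, and since $\|E_\alpha(t^\alpha A)x\|\le\sup_{t\ge 0}\|E_\alpha(t^\alpha A)\|\cdot\|x\|$, the standard contraction estimate gives $\|\xi\|_\beta\le \frac{1}{1-1/2}\|E_\alpha(\cdot^\alpha A)x\|_\beta\le 2\sup_{t\ge0}\|E_\alpha(t^\alpha A)\|\cdot\|x\|$, whence $\|\varphi(t,x)\|\le\beta(T)\cdot 2\sup_{t\ge0}\|E_\alpha(t^\alpha A)\|\cdot\|x\|$ for all $t\ge 0$; choosing $\delta$ small relative to a prescribed $\eps$ gives stability. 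Finally, for the attractivity part one repeats the second half of the proof of Theorem~\ref{MainTheorem}: let $a:=\limsup_{t\to\infty}\|\varphi(t,x)\|$; using $\lim_{t\to\infty}\|E_\alpha(t^\alpha A)\|=0$ (Theorem~\ref{thm.spec.estimate}(i)), the fact that $\lim_{t\to\infty}\int_{t-T}^t\tau^{\alpha-1}\|E_{\alpha,\alpha}(\tau^\alpha A)\|\,d\tau=0$ (by Theorem~\ref{thm.spec.estimate}(ii), kills the contribution of $[0,T]$), and the smallness $\sup_{t\ge T}K(t)<\frac{1}{5M}$ together with $\int_0^\infty\tau^{\alpha-1}\|E_{\alpha,\alpha}(\tau^\alpha A)\|\,d\tau\le M$ to bound the $[T,t]$-contribution by $\frac{1}{5M}\cdot M\cdot(a+\epsilon')<a$, one reaches a contradiction unless $a=0$. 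The main obstacle is the Lyapunov-stability step: unlike the linear case one must extract it from the contraction estimate and continuous dependence on the initial value rather than from Theorem~\ref{thm.d-dim}, so some care is needed to make the dependence of the fixed point on $x$ explicit and uniform; everything else is a routine transcription of the linear proofs with $K$ in place of $\|Q(\cdot)\|$.
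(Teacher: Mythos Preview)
Your proposal is correct and follows essentially the same route as the paper's proof: set up the weighted norm $\|\cdot\|_\beta$, establish the contraction estimate \eqref{eqn.new.nonlinear} by transcribing the linear computations with $K$ in place of $\|Q(\cdot)\|$, observe that Theorem~\ref{thm.d-dim} is unavailable and instead derive Lyapunov stability from the contraction by comparing with $\tilde\xi\equiv 0$ (the paper phrases this as ``$\mathcal{T}_x$ is a self-map of the ball $B_{C_\infty,\|\cdot\|_\beta}(0,r)$ when $\|x\|\le r^*$'', whereas you use the equivalent a~posteriori bound $\|\xi\|_\beta\le 2\|\mathcal{T}_x(0)\|_\beta$), and finish attractivity as in Theorem~\ref{MainTheorem}. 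One small point to tighten in your attractivity sketch: the threshold $T$ at which $K$ becomes small and the threshold at which $\|\xi(t)\|\le a+\epsilon'$ need not coincide, so you should split the integral at some $T'\ge T$ chosen from the $\limsup$; this is routine.
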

\begin{proof}
First we note that the similar Theorem~\ref{stability-by-decaying-pertubation} for linear case was proved with the use of linearity of \eqref{mainEq}, which, in general, is not available in our case of this theorem. 
%Hence we have to make bigger changes to the proof of Theorem~\ref{stability-by-decaying-pertubation} to get a suitable proof for this theorem.
%Here we indicate the changes we need to make to the proof of Theorem~\ref{stability-by-decaying-pertubation}. 
To overcome the lack of linearity, we do as follows.
We repeat  the proof of Theorem~\ref{stability-by-decaying-pertubation} with obvious changes from $Q(t)x$ to $f(t,x)$, and  $\|Q\|$ replaced by $K$ in the estimations \eqref{eqn.big-M} and \eqref{eqn.tail}. Define the norm $\|\cdot\|_{\beta}$ as in the proof of Theorem~\ref{stability-by-decaying-pertubation}, and let ${\mathcal T}_x$ be the Lyapunov--Perron operator associated with \eqref{mainEq0}. Follow the lines of the proof of Theorem~\ref{stability-by-decaying-pertubation}, similar to  \eqref{eqn.new.linear},  for all $x\in\R^d$, $\xi,\tilde\xi\in C_\infty(\R^d)$ we have
\begin{equation}\label{eqn.new.nonlinear}
\|\mathcal{T}_x(\xi)- \mathcal{T}_x(\tilde{\xi})\|_{\beta}
 \leq \frac{1}{2}
 \|\xi-\tilde \xi\|_{\beta}.
\end{equation}
Let $r>0$ be arbitrary.  Set $B_{C_\infty,\|\cdot\|_\beta}(0,r):=\{\xi\in C([0,\infty);\R^d):\|\xi\|_\beta\le r\}$.
Let $x\in\R^d$ be any vector satisfying the condition
\begin{equation}\label{eqn.nonlinear1}
\|x\| \leq \frac{r}{2\sup_{t\geq 0}{\|E_\alpha(t^\alpha A)\|}} =:r^*.
\end{equation}
Substituting $\tilde\xi\equiv 0$ into \eqref{eqn.new.nonlinear}, then since ${\mathcal T}_x\tilde\xi = E_\alpha(t^\alpha A) x$, taking into account the definition of the norm $\|\cdot\|_\beta$ we get
$$
\|\mathcal{T}_x\xi\|_\beta\le \|x\|\sup_{t\ge 0}\|E_\alpha(t^\alpha A)\|+\frac{r}{2} < r.
$$
Therefore, the Lyapunov--Perron operator \eqref{eqn.LPO.nonlinear} associated with \eqref{mainEq0} with the initial value $x$ satisfying $\|x\|\leq r^*$ is a self map, and together with \eqref{eqn.new.nonlinear} is a contraction, in the closed ball 
$B_{C_\infty,\|\cdot\|_\beta}(0,r)$
of $(C_\infty(\R^d), \|\cdot\|_\beta)$. Hence, since the norm $\|\cdot\|_{\beta}$ and the max norm are equivalent, this shows that \eqref{mainEq0} is stable. 
%To show the contraction property indicated above we exploit \eqref{eqn.new.linear} and the choice of $x$ %satisfying \eqref{eqn.nonlinear1}.
Thus we proved stability of \eqref{mainEq0} avoiding 
necessarity of using linearity as done in the proof of Theorem~\ref{stability-by-decaying-pertubation}. The proof of asymptotic stability is similar to that of the proof of Theorem~\ref{MainTheorem}.
\end{proof}

\section*{Acknowledgement}
This research of the authors is funded by the Vietnam National Foundation for
Science and Technology Development (NAFOSTED) under Grant Number 101.03-2014.42.


\begin{thebibliography}{1}

\bibitem{Adrianova}
L.Ya.~Adrianova.
\newblock{\em Introduction to Linear Systems of Differential Equations.}
\newblock{Translations of Mathematical Monographs \bf{46}.}
\newblock{Americal Mathematical Society,} Providence, Rhode Island, 1995.
\bibitem{Baleanu}
D.~Baleanu and O.~Mustafa.
\newblock{On the global existence of solutions to a class of fractional
differential equations.}
\newblock{\em Computers and Mathematics with Applications,} {\bf 59} (2010), 1835--1841.
%
\bibitem{Bonilla2007}
B.~Bonilla, M.~Rivero and J.J.~Trujillo.
\newblock{On systems of linear fractional differential equations with constant coefficients.}
\newblock{\em Applied Mathematics and Computation,} {\bf 187}(2007), 68--78.

\bibitem{Chen_2012}
F.~Chen, J.~Nieto and Y.~Zhou.
\newblock{Global attracting for nonlinear fractional differential equations.}
\newblock{\em Nonlinear Analysis: Real World Applications,} {\bf 13}(2012), 287--298.
% 
\bibitem{Coddington}
E.A.~Coddington, N.~Levinson.
\newblock{\em Theory of Differential Equations.}
\newblock{McCrow--Hill,} New York, 1955.
\bibitem{Cong}
N.D.~Cong, T.S.~Doan, S.~Siegmund and H.T.~Tuan.
\newblock{On stable manifolds for planar fractional differential equations.}
\newblock{\em Applied mathematics and Computation,} {\bf 226}(2014), 157-168.
\bibitem{Cong_1}
N.D.~Cong, T.S.~Doan and  H.T.~Tuan.
\newblock{On fractional Lyapunov exponent for solutions of Linear fractional differential equations.}
\newblock{\em Fract. Calc. Appl. Anal.,} {\bf 17}(2014), No 2, 285--306.
\bibitem{Cong_2}
N.D.~Cong, T.S.~Doan, H.T.~Tuan and S.~Siegmund.
\newblock{ Structure of the Fractional Lyapunov Spectrum for Linear Fractional Differential Equations.}
\newblock{\em Advances in Dynamical Systems and Applications,} {\bf 9}(2014), 149-159.
%
\bibitem{Cong_3}
N.D.~Cong, T.S.~Doan, S.~Siegmund and H.T.~Tuan.
\newblock{Linearized Asymptotic Stability for Fractional Differential Equations,} {\em arXiv:1512.04989v1.}
%
\bibitem{Deng_2007}
W.H.~Deng, C.P.~Li and J.~H. Lu.
\newblock{Stability analysis of linear fractional differential systems with multiple time delays.}
\newblock{\em Nonlinear Dynamics,} {\bf 48}(2007), 409--416.
%

\bibitem{Kai}
K.~Diethelm.
\newblock{The analysis of fractional differential equations. An application-oriented exposition using differential operators of Caputo type.}
\newblock{\em Lecture Notes in Mathematics} {\bf 2004}.
\newblock{Springer-Verlag, Berlin, 2010.}
%

\bibitem{Gorenflo_2014}
R.~Gorenflo, A.A.~Kilbas, F.~Mainardi and S.V.~Rogosin.
\newblock{\em Mittag-Leffler Functions, Related Topics and Applications.}
\newblock{Springer Monographs in Mathematics.}
\newblock{Springer-Verlag, Berlin, 2014.}
%

\bibitem{Kilbas}
A.A.~Kilbas, H.M.~Srivastava and J.J.~Trujillo.
\newblock{\em Theory and Applications of Fractional Differential Equations.}
\newblock{North-Holland Mathematics Studies \textbf{204}.}
\newblock{Elsevier Science B.V., Amsterdam, 2006.}
\bibitem{Lancaster}
P.~Lancaster and M.~Tismenetsky.
\newblock{\em The theory of matrices. Second Edition.}
\newblock{Academic Press, San Diego, 1985.}
%

%
\bibitem{Li_2011}
C.P.~Li, and F.R.~Zhang.
\newblock{A survey on the stability of fractional differential equations.}
\newblock{\em Eur. Phys. J. Special Topics,} {\bf 193}(2011), 27--47.
%
\bibitem{Matignon}
D.~Matignon.
\newblock{Stability results for fractional differential equations with applications to control processing.}
\newblock{\em Computational Eng. in Sys. Appl.,} {\bf 2}(1996), 963--968.

\bibitem{Podlubny}
I.~Podlubny.
\newblock{\em Fractional Differential Equations. An Introduction to Fractional Derivatives, Fractional Differential Equations, to Methods of their Solution and some of their Applications.}
\newblock{Mathematics in Science and Engineering \textbf{198}.}
\newblock{Academic Press, Inc., San Diego, CA, 1999}.
%
\bibitem{Qian}
Deliang~Qian, Changpin~Li, Ravi P.~Agarwal and Patricia J.Y.~Wong.
\newblock{Stability analysis of fractional differential systems with Riemann-Liouville derivative.}
\newblock{\em Mathematical and Computer Modeling,} {\bf 52}(2010), 862-874.
%
\bibitem{Sabatier}
J.~Sabatier, M.~Moze and C.~Farges.
\newblock{LMI stability conditions for fractional order systems.}
\newblock{\em Computers and Mathematics with Applications,} {\bf 59}(2010), 1594--1609.
%

\bibitem{Tisdell2012}
C.C.~Tisdell.
\newblock{On the application of sequential and fixed-point method to fractional differential equations of arbitrary order.}
\newblock{\em Journal of Integral Equations and Applications,} {\bf 24}(2012), No 2, 283--319.
%
\bibitem{Wen_2008}
X.~Wen, Z.~Wu, and J.~Lu.
\newblock{Stability analysis of a class of nonlinear fractional--order systems.}
\newblock{\em IEEE Transaction on Circuits and Systems--II: Express Briefs,} {\bf 55}(2008), No 11, 1178--1182.
%
\end{thebibliography}
\end{document}